\documentclass[12pt]{article}
\usepackage[cm]{fullpage}
\usepackage[small]{titlesec}
\usepackage[cm]{fullpage}

\addtolength{\oddsidemargin}{1.6cm}
\addtolength{\evensidemargin}{2cm}
\addtolength{\textwidth}{-3.5cm} \addtolength{\topmargin}{1cm}
\addtolength{\textheight}{-2.5cm}


\usepackage{amsmath,amscd, float,rotating}
\usepackage{pb-diagram}

\usepackage{latexsym}
\usepackage{amsfonts,color}
\usepackage{amsmath}
\usepackage{amssymb}
\usepackage{txfonts}

\numberwithin{equation}{section}
\newcommand{\qed}{\hfill \ensuremath{\Box}}

\def\XXint#1#2#3{{\setbox0=\hbox{$#1{#2#3}{\int}$}
\vcenter{\hbox{$#2#3$}}\kern-.5\wd0}}

\newcommand{\beq}{\begin{equation}}
\newcommand{\eeq}{\end{equation}}

\newcommand{\beqs}{\begin{eqnarray*}}
\newcommand{\eeqs}{\end{eqnarray*}}

\newcommand{\CC}{\mathbb{C}}

\newcommand{\N}{\mathbb{N}}
\newcommand{\Z}{\mathbb{Z}}
\newcommand{\cO}{\mathcal{O}}

\newcommand{\ddbar}{\sqrt{-1}\partial\bar{\partial}}
\newcommand{\barX}{\overline{X}}

\begin{document}
\newcounter{remark}
\newcounter{theor}
\setcounter{remark}{0} \setcounter{theor}{1}
\newtheorem{claim}{Claim}
\newtheorem{theorem}{Theorem}[section]
\newtheorem{proposition}{Proposition}[section]
\newtheorem{lemma}{Lemma}[section]
\newtheorem{definition}{Definition}[section]
\newtheorem{conjecture}{Conjecture}[section]
\newtheorem{corollary}{Corollary}[section]
\newenvironment{proof}[1][Proof]{\begin{trivlist}
\item[\hskip \labelsep {\bfseries #1}]}{\end{trivlist}}
\newenvironment{remark}[1][Remark]{\addtocounter{remark}{1} \begin{trivlist}
\item[\hskip \labelsep {\bfseries #1
\thesection.\theremark}]}{\end{trivlist}}
\newenvironment{example}[1][Example]{\addtocounter{remark}{1} \begin{trivlist}
\item[\hskip \labelsep {\bfseries #1
\thesection.\theremark}]}{\end{trivlist}}
~

\begin{center}
{\large \bf
Asymptotic expansions of complete K\"ahler-Einstein metrics with finite volume on quasi-projective manifolds}
\bigskip\bigskip

{Xumin Jiang$^{*}$ and Yalong Shi$^{**}$} \\

\bigskip

\end{center}

\begin{abstract}

{\footnotesize We give an elementary proof to the asymptotic expansion formula of Rochon-Zhang for the unique complete K\"ahler-Einstein metric of Cheng-Yau, Kobayashi, Tian-Yau and Bando on quasi-projective manifolds. The main tools are the solution formula for second order ODE's with constant coefficients and spectral theory for Laplacian operator on a closed manifold.}

\end{abstract}

\tableofcontents

\section{Introduction}

Complete non-compact K\"ahler-Einstein metrics play an important role in several complex variables and geometry as observed by C. Fefferman \cite{Fef} , S. Cheng and S.T.Yau \cite{CY2} in the 1970's. The existence of such metrics in strictly pseudoconvex domains with smooth boundary in $\CC^n$ is proved by Cheng-Yau \cite{CY1} extending Yau's solution of Calabi's conjecture \cite{Yau}. In \cite{CY1}, boundary regularity for the solution is also discussed. Later, a more precise boundary regularity theorem and an asymptotic expansion of the solution near boundary are obtained by  J. Lee and R. Melrose in 1982 \cite{LM}. Later, the coefficients of Lee-Melrose's expansion have been calculated by J. Lee \cite{Le} and R. Graham \cite{Gra}. See also the recent work of Q. Han and X. Jiang \cite{HJ2} for another proof for the asymptotic expansion formula.

If the manifold is not an Euclidean domain, up to now, all the known examples of complete K\"ahler-Einstein metrics with negative Einstein constants are quasi-projective. Let $\barX$ be a smooth projective manifold of complex dimension $n$, $D\subset \barX$ a smooth hypersurface such that $K_{\barX}+D$ is ample. In the 1990's, in a series of works Cheng-Yau \cite{CY2}, R. Kobayashi \cite{Ko}, Tian-Yau \cite{TY},  S. Bando \cite{Ban} proved that the quasi-projective manifold $X:=\barX\setminus D$ admits a unique complete K\"ahler-Einstein metric $\omega_{KE}$ with finite volume and $Ric(\omega_{KE})=-\omega_{KE}$. In fact, their results also allows $D$ to be a simple normal crossing divisor, and $K_{\barX}+D$ is only big and nef, and ``ample modulo $D$ ".  For the K\"ahler-Ricci flow approach to the existence of such metrics, please look at the work of J. Lott and Z. Zhang \cite{LZ}.

The asymptotic expansion of these quasi-projective K\"ahler-Einstein metrics is first studied by G. Schumacher in 1998.  By adjunction formula $K_D=(K_{\barX}+D)|_D>0$, so Yau's theorem guarantees the existence of a unique K\"ahler-Einstein metric $\omega_D$ satisfying $Ric(\omega_D)=-\omega_D$. Schumacher proved in \cite{Sch} that the restriction of $\omega_{KE}$ to directions parallel to $D$ will converges to $\omega_D$.  Later, a systematical study is done by D. Wu in his thesis \cite{Wu} in 2006 by analyzing the mapping property of the linearized complex Monge-Amp\`ere operator on weighted Cheng-Yau H\"older rings. Wu obtained an asymptotic expansion of the solution $u$ to the complex Monge-Amp\`re equation in terms of powers of $\sigma=(\log\|s\|^2)^{-1}$, where $s$ is the defining section of $D$. However, as observed by  F. Rochon and Z. Zhang in 2012 \cite{RoZh}, $\sigma\log\sigma$-term should appear in general, depending on the normal bundle of $D$. In Rochon-Zhang \cite{RoZh}, a more precise asymptotic expansion is obtained using the so called ``b-calculus", developed by Melrose and his students.

For asymptotic expansions of other types of canonical metrics, for example complete Calabi-Yau metrics or conic K\"ahler-Einstein metrics, we refer the reader to the works of B. Santoro \cite{San}, T. Jeffres, R. Mazzeo and Y. Rubinstein \cite{JMR}, H. Yin and K. Zheng \cite{YZ}.

In this paper, we will give another proof of Rochon-Zhang's theorem by elementary tools, namely, besides rescaled interior Schauder estimates, the key tools are spectral decomposition for Laplacian operators on closed manifolds and the elementary theory of second order ordinary differential equations with constant coefficients. See also L. Andersson, P. Chru\'sciel and H. Friedrich \cite{ACF1982CMP}, H. Jian and X. Wang \cite{JW}, and Han-Jiang \cite{HJ1} \cite{HJ2} for the ODE iteration method. Even though our result is not new, this elementary approach is interesting in itself, and the authors expect it
to be useful in other geometric problems.

The main theorem of this paper is:
\begin{theorem}\label{main}
Let $\omega_{KE}=\omega+\ddbar u$ be the unique complete K\"ahler-Einstein metric with finite volume on $X=\barX\setminus D$, and let $x=(-\log r^2)^{-1}$, where $r$ is the distance to $D$ with respect to some fixed K\"ahler metric on $\barX$. Then we have a poly-homogeneous asymptotic expansion for $u$:
$$u\sim \sum_{i\in I}\sum_{j=0}^{N_i} c_{i,j} x^i(\log x)^j,$$
where $I$ is the index set determined by the eigenvalues of the Laplacian operator of the unique K\"ahler-Einstein metric on $D$ and $c_{i,j}$'s are smooth functions on $D$, regarded as functions in a neighborhood of $D$ via the Tubular Neighborhood Theorem. The precise meaning of the above expansion is that
$$u-\sum_{i=0}^k \sum_{j=0}^{N_i} c_{i,j} x^i(\log x)^j=\bold{O}(x^{k_+}),$$
where $k_+$ is the next term of $k$ in $I$.
\end{theorem}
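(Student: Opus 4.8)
The plan is to reduce the Einstein condition to the scalar complex Monge--Amp\`ere equation $\log\frac{(\omega+\ddbar u)^n}{\omega^n}=u+f$ on $X$, where $f$ is the smooth function measuring the failure of the fixed background $\omega$ to be K\"ahler--Einstein, and then to analyze this equation in a tubular neighborhood of $D$. First I would fix, via the Tubular Neighborhood Theorem, the boundary-defining function $x=(-\log r^2)^{-1}$ together with coordinates $y$ on $D$, and record the leading behavior of the complete metric: in the normal direction $\omega_{KE}$ is asymptotic to the Poincar\'e cusp model, while its restriction to directions parallel to $D$ converges to $\omega_D$ by Schumacher's theorem. The key structural observation is that, written in the variable $x$, the linearized operator $L:=\Delta_{\omega_{KE}}-1$ has leading part an equidimensional (Euler-type) operator in $x$ plus a tangential piece that is, to leading order, the Laplacian $\Delta_{\omega_D}$ of the fiber metric; passing to $\tau=\log x$ converts the radial part into a second-order ODE with \emph{constant} coefficients.

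Next I would run the ODE iteration. Decomposing every quantity into eigenfunctions $\phi_\lambda$ of $\Delta_{\omega_D}$ on the closed manifold $D$ (this is where spectral theory enters), the leading equation for the $\lambda$-mode becomes a constant-coefficient second-order ODE in $\tau$ whose indicial roots $\gamma^{\pm}(\lambda)$ depend on $\lambda$ and single out the admissible exponents $x^{\gamma^{\pm}(\lambda)}$. The index set $I$ is then the additive sub-semigroup of $\R$ generated by these roots over the spectrum of $\Delta_{\omega_D}$, together with the non-negative integers contributed by the Taylor data of the geometry and by the nonlinear (higher-order) terms of the Monge--Amp\`ere operator; one checks that $I$ is discrete and bounded below so the formal series is meaningful. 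Starting from the leading term and proceeding by increasing exponent, at each stage I subtract the already-constructed partial sum, so that the remainder solves $L(\cdot)=(\text{explicit forcing built from lower-order coefficients})+(\text{genuinely higher-order error})$. Solving the model ODE against this forcing produces the next coefficient $c_{i,j}(y)$, which itself satisfies an elliptic equation on $D$ assembled from $\Delta_{\omega_D}$.

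The $\log x$ factors, and the bound $N_i$ on their powers, arise precisely as \emph{resonances} of this ODE: whenever a forcing exponent coincides with an indicial root, or two indicial roots collide, the particular solution acquires an extra power of $\tau=\log x$. This is exactly the mechanism producing the $\sigma\log\sigma$-type terms detected by Rochon--Zhang, and whether it occurs is governed by how the normal bundle of $D$ sits relative to $K_{\barX}+D$. Finally I would convert the formal expansion into the stated sharp estimate $u-\sum_{i\le k}\sum_{j=0}^{N_i} c_{i,j}\,x^i(\log x)^j=O(x^{k_+})$ by a barrier/maximum-principle argument for $L$ combined with the \emph{rescaled} interior Schauder estimates: on balls of fixed radius in the complete metric $\omega_{KE}$ (which shrink in the background metric as $x\to 0$) the equation is uniformly elliptic, so Schauder yields the weighted bounds that upgrade the formal remainder to a genuine one and simultaneously justify differentiating the series.

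The main obstacle I expect is this last step together with the bookkeeping of the nonlinearity: one must show that the quadratic and higher terms of the Monge--Amp\`ere operator never generate exponents outside $I$ nor higher powers of $\log x$ than predicted, and that the model ODE operator is invertible on the relevant weighted spaces \emph{uniformly across the entire spectrum} of $\Delta_{\omega_D}$, including the large eigenvalues, where $\gamma^{\pm}(\lambda)$ drift and the resonance pattern changes. Pinning down $N_i$ sharply and establishing the decay of the remainder uniformly up to $D$ in the degenerate cusp geometry is the delicate technical core; everything else is the routine machinery of constant-coefficient ODEs and elliptic estimates on the compact fiber $D$.
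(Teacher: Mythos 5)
Your skeleton coincides with the paper's strategy: reduce to the Monge--Amp\`ere equation $\log\frac{(\omega+\ddbar u)^n}{\omega^n}-u=f$, separate variables using the spectrum of $\Delta_D$, observe that the radial part is an Euler operator (constant-coefficient after $\tau=\log x$) with indicial roots $\overline{m}_l,\underline{m}_l$ solving $\tfrac{1}{2}a^2+\tfrac{1}{2}a-1=\lambda_l$, let resonances generate the $\log x$ factors, take $I$ to be the monoid generated by $1$ and the roots $\overline{m}_l$, and close an induction using interior Schauder estimates. Two of your steps, however, have genuine gaps.

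First, near $D$ a function is not a function of $(x,z')$ alone: the tubular neighborhood is a disk bundle, and $u$ also depends on the fiber angle $\theta$. Your decomposition of ``every quantity'' into eigenfunctions of $\Delta_{\omega_D}$ with coefficients depending only on $x$ is therefore not available until one proves that the angular dependence is negligible. This is a real lemma, not bookkeeping: the paper proves $\partial^k_\theta u=O(x^\infty)$ (with all derivatives) by expressing $\partial_\theta$ in Cheng--Yau quasi-coordinates, where its coefficients carry a factor of $x$, and then using periodicity in $\theta$ to integrate the gain back to low-order $\theta$-derivatives; only after replacing $u$ by its $\theta$-average, with $O(x^\infty)$ error, does the reduction to ODEs over the spectrum of $\Delta_D$ make sense.

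Second, the barrier/maximum-principle mechanism you propose for upgrading the formal series to the remainder estimate fails beyond the first indicial root. For the model operator $\Delta_D+\tfrac{1}{2}x^2\partial^2_x+x\partial_x-1$, a comparison function $x^a$ on the $\lambda_l$-mode is a supersolution only when $\tfrac{1}{2}a^2+\tfrac{1}{2}a-1<\lambda_l$, i.e.\ only for $a<\overline{m}_l$; for a barrier constant along $D$ (the $\lambda_0=0$ mode) this caps $a<1$. This is precisely why Cheng--Yau's maximum principle appears in the paper only for the leading term $u=c_0+O(\sigma^{-1}\log\sigma)$ --- the log there is the resonance at $a=1$ --- and it cannot be iterated to certify $O(x^{k_+})$ remainders with $k_+>1$. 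The paper's substitute is to solve each mode ODE explicitly by variation of parameters, fix the two free constants by boundedness as $x\to 0$ and evaluation at a fixed $x_0$, and then control the resulting infinite sums over the spectrum using rapid decay of Fourier coefficients of smooth data ($|F_l|\leq C_N\lambda_l^{-N}$), the Weyl asymptotics $\lambda_l\sim l^{1/(n-1)}$, and the eigenfunction bound $\|\varphi_l\|_{L^\infty}\leq C\lambda_l^{(2n-3)/4}$. That explicit spectral-sum argument is how the uniformity across the whole spectrum --- which you correctly single out as the delicate core --- is actually achieved; without it, neither the low (non-coercive) modes nor the infinite tail of high modes is controlled by comparison arguments.
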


In this paper, $\bold{O}(x^N)$, for any real number $N>0$, denotes a function $\psi$ such that, for integers $k, l\geq 0$,
$$|(x\partial_x)^l\partial^k_{z',\bar z'}\psi|\leq C_{k,l,N} x^{N}.$$

\begin{remark}
It is obvious that in the statement of Theorem \ref{main}, we can replace $x$ by $(-\log\|s\|)^{-1}$, where $s$ is the defining section of $D$.
\end{remark}

Usually, the coefficients of an asymptotic expansion formula in a geometric problem will carry important geometric informations. For example the famous heat kernel expansion and Bergman kernel expansion play very important role in Riemannian geometry and K\"ahler geometry. Let's also mention the boundary asymptotic expansion of conformally compact Einstein metrics, which is very useful in conformal geometry. It is expected that the coefficients of the asymptotic expansion in Theorem \ref{main} will also carry interesting geometric informations. We leave this problem to a future work.\footnote{Some initial terms have already been carried out by Rochon-Zhang \cite{RoZh}. See also Lemma \ref{lem-c11}.} 

Recently, J. Sun and S. Sun studied the log K-stability of polarized Riemann surface with standard cusp singularities \cite{SS}. An important ingredient of their proof is a precise estimate of the Bergman kernel near the cusp singularity and in the neck region, which in turn requires better asymptotic behaviors of the hyperbolic metric near the singularity. The result and method of this paper should be helpful to attack the higher dimensional log K-stability problem.

The paper is organized as follows: In \S \ref{review}, we recall the basic facts concerning the construction of finite volume complete quasi-projective K\"ahler-Einstein metrics, including Cheng-Yau's quasi-coordinate map and their H\"older spaces. We shall derive some basic properties that will be used in later sections, and obtain the leading term of the solution via Cheng-Yau's maximum principle on complete non-compact manifolds. Then in \S \ref{holo coord}, we compute the linearization of the associated complex Monge-Amp\`ere equation in local holomorphic charts. Since we need to work ``semi-globally", we shall need another set of coordinates that is not holomorphic in general, namely coordinates from the Tubular Neighborhood Theorem. Since the holomorphic version of the Tubular Neighborhood Theorem does not hold in general, this non-holomorphic coordinate transformation causes most of the complication of this paper. The detailed computation is included in the appendix for the convenience of the readers.  Then we show in \S \ref{formal} that one can derive a series of formal approximate solutions. They can be viewed as a formal asymptotic expansion. The $x\log x$-term and index set appear naturally in this process. Even though this part is not logically required in our proof, we feel that it may be helpful for the readers to understand our proof. Finally, in \S \ref{iterate}, we use the solution formula of second order ODE to do induction, and hence finish the proof.\\

\noindent {\bf{Acknowledgements:}} This work is carried out during the second author's visit at Rutgers University by the support of Hwa Ying Foundation. He would like to  thank the Foundation for its support, thank Professor Jian Song for his invitation and the Department of Mathematics at Rutgers University for its hospitality. Both authors thank Professor Jian Song for his interest in this work.

 \section{Generalities on the complete K\"ahler-Einstein metrics}\label{review}

Let $D$ be a smooth hypersurface in $\barX$. As is well known, $D$ determines a unique holomorphic line bundle $\cO(D)$, and $D=(s)$ is the divisor of a (unique up to a non-zero constant factor) holomorphic section $s\in H^0(X,\cO(D))$. In the following, we always assume that $L:=K_X+D$ is ample. We choose a smooth Hermitian metric $h$ on $L$, such that the curvature form $\sqrt{-1}\Theta_L>0$. We also choose a smooth metric on $\cO(D)$, locally of the form $e^{-\varphi}$. Locally at some point $p\in D$, we choose coordinates s.t. $D$ is defined by $\{z_n=0\}$, then $||s||^2=|z_n|^2 e^{-\varphi}$.

Now consider the following Carlson-Griffiths \cite{CG} reference metric on $X=\barX\setminus D$
$$\omega:=\sqrt{-1}\Theta_L-\ddbar\log\Big(\log\frac{1}{\epsilon||s||^2}\Big)^2.$$
Direct computation shows that when $\epsilon<<1$, it is indeed a complete K\"ahler metric with finite volume. For simplicity, we rescale $s$ by $\sqrt{\epsilon}$, and {\bf from now on, we always assume $\epsilon=1$}. As observed by Kobayashi \cite{Ko} and Tian-Yau \cite{TY}, $(X, \omega)$ has bounded geometry of infinite order, which means that one can find a family of holomorphic maps of maximal rank from balls of definite size in $\CC^n$ into $X$ (the so called ``quasi-coordinates"), whose images cover $X$, such that the pull backs of $\omega$ to the pre-images  are uniformly equivalent to the standard Euclidean metric and all the derivatives of  the pull-back metric tensor are uniformly bounded. If we choose local holomorphic coordinates $(z_1,\dots, z_n)$ such that $D$ is defined by $z_n=0$, then typical quasi-coordinates $(z_1,\dots, z_{n-1}, w)$ near $D$ can be defined by
$$z_n=exp\Big(\frac{1+\eta}{1-\eta}\cdot\frac{w+1}{w-1}\Big), $$
where $0<\eta<1$ and $|w|\leq \frac{2}{3}$.
According to Cheng-Yau \cite{CY1}, one can define the global H\"older norm $\|u\|_{k,\alpha}$ to be the supremum of the Euclidean $C^{k,\alpha}$ norms of the pull-back of $u$ on quasi-coordinate charts. We define  $C^{k,\alpha}(X)$ to be the space of $C^k$ functions $u$ such that $\|u\|_{k,\alpha}<\infty$. Using Cheng-Yau's method, Kobayashi \cite{Ko}, Tian-Yau \cite{TY} and Bando \cite{Ban} proved the following existence theorem:

\begin{theorem}[Kobayashi, Tian-Yau, Bando]\label{existence}
There exists a unique complete K\"ahler-Einstein metric $\omega_{KE}=\omega+\ddbar u$ on $X$ satisfying $$Ric(\omega_{KE})=-\omega_{KE}.$$ 
Moreover, $u\in C^{k,\alpha}(X)$ for any $k\in\N$ and $0<\alpha<1$ and $\omega_{KE}$ is equivalent to $\omega$.
\end{theorem}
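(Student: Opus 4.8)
The plan is to recast the Einstein condition as a single scalar complex Monge-Amp\`ere equation and to solve it by the continuity method inside the Cheng-Yau H\"older spaces $C^{k,\alpha}(X)$. Writing $\omega_{KE}=\omega+\ddbar u$ and using the identity $Ric(\omega_{KE})=Ric(\omega)-\ddbar\log\frac{(\omega+\ddbar u)^n}{\omega^n}$, the equation $Ric(\omega_{KE})=-\omega_{KE}$ becomes, after integrating the resulting $\ddbar$-closed identity and absorbing the constant,
$$(\omega+\ddbar u)^n=e^{u+F}\omega^n,$$
where the Ricci potential $F$ is determined by $\ddbar F=Ric(\omega)+\omega$. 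The first preparatory step is to verify that $F\in C^{k,\alpha}(X)$ for every $k$: the Carlson-Griffiths metric $\omega$ is designed so that $Ric(\omega)+\omega$ is uniformly bounded (indeed decaying) near $D$, and this bound is obtained by pulling $F$ back to the quasi-coordinate charts, where $\omega$ is uniformly equivalent to the Euclidean metric with all derivatives controlled.

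For existence I would run the continuity method along the family
$$(\omega+\ddbar u_t)^n=e^{u_t+tF}\omega^n,\qquad t\in[0,1].$$
At $t=0$ the equation is solved by $u_0\equiv0$, so the solution set $T\subset[0,1]$ is nonempty. Openness follows from the implicit function theorem once I show that the linearization $\Delta_{u_t}-\mathrm{Id}$ is an isomorphism $C^{k+2,\alpha}(X)\to C^{k,\alpha}(X)$, where $\Delta_{u_t}$ is the Laplacian of $\omega_{u_t}=\omega+\ddbar u_t$. Injectivity is immediate from the sign of the zeroth-order term: if $(\Delta_{u_t}-\mathrm{Id})v=0$, the Cheng-Yau maximum principle for complete manifolds forces $v\equiv0$. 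Surjectivity, and hence the Fredholm invertibility, comes from the bounded geometry of $(X,\omega)$ together with the rescaled Schauder estimates on the quasi-coordinate balls.

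Closedness is where the analytic weight lies, and it rests on a priori estimates that are uniform in $t$. The $C^0$ bound $\|u_t\|_{C^0}\le\|F\|_{C^0}$ follows again from the Cheng-Yau maximum principle applied at the (possibly asymptotic) extrema of $u_t$. The decisive estimate is the second-order one: following Yau and Aubin, I would bound $\mathrm{tr}_\omega\omega_{u_t}$ by differentiating the equation twice and invoking the maximum principle, the essential input being that the bisectional curvature of $\omega$ is uniformly bounded, which is exactly the bounded-geometry property recorded in the excerpt. Combined with the fixed determinant coming from the Monge-Amp\`ere equation, this yields the two-sided bound and hence the equivalence $\omega_{KE}\sim\omega$. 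Once the metrics are uniformly equivalent the equation is uniformly elliptic on every quasi-coordinate ball, and the rescaled interior Schauder estimates (together with a Calabi-type $C^3$ estimate) promote the bound to all the $C^{k,\alpha}(X)$-norms, closing $T$ and delivering $u\in C^{k,\alpha}(X)$ for all $k$ and $0<\alpha<1$. Uniqueness is a final maximum-principle argument: subtracting the equations for two solutions and writing the difference of log-determinants as an averaged complex Laplacian of $w=u_1-u_2$ gives $(\Delta'-\mathrm{Id})w=0$ for a genuine Laplacian $\Delta'$, whence $w\equiv0$.

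The main obstacle, and the heart of the proof, is the a priori second-order estimate on a \emph{complete noncompact} manifold: one must simultaneously control $\mathrm{tr}_\omega\omega_{u_t}$ using only the bounded curvature of the reference metric and justify every application of the maximum principle in the noncompact setting through the Cheng-Yau (Omori-Yau type) maximum principle, while ensuring that all constants are uniform across the quasi-coordinate charts so that the local estimates globalize to the $C^{k,\alpha}(X)$ norm.
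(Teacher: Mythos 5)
Your proposal is correct in outline and is essentially the argument the paper itself relies on: the paper does not actually prove Theorem \ref{existence}, but cites Kobayashi, Tian-Yau and Bando, noting only that existence comes from solving the complex Monge-Amp\`ere equation $\log\frac{(\omega+\ddbar u)^n}{\omega^n}-u=f$ with $Ric(\omega)+\omega=\ddbar f$ (equivalent to your exponential form) by Cheng-Yau's method in the quasi-coordinate H\"older spaces. That is precisely your continuity-method scheme: openness via invertibility of $\Delta_{u_t}-\mathrm{Id}$ between $C^{k+2,\alpha}(X)$ and $C^{k,\alpha}(X)$, the $C^0$ bound via the Cheng-Yau (Omori-Yau type) maximum principle, the Yau/Aubin second-order estimate using the bounded bisectional curvature of the Carlson-Griffiths metric, a Calabi-type third-order estimate, and rescaled Schauder estimates on quasi-coordinate balls to land in $C^{k,\alpha}(X)$ for all $k$ and conclude $\omega_{KE}\sim\omega$.

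The one place you genuinely diverge is uniqueness, and there your argument is slightly weaker than what the paper asserts. The paper's one-line proof is Yau's Schwarz lemma, which compares an arbitrary complete K\"ahler-Einstein competitor with $\omega_{KE}$ directly, via the identity map in both directions, and needs no a priori boundedness of the potential. Your maximum-principle argument on $w=u_1-u_2$ presupposes $\sup_X|w|<\infty$ (and enough bounded geometry of both metrics to run the generalized maximum principle for the averaged operator $\Delta'$), so as written it only proves uniqueness within the class of solutions whose potential is bounded, e.g.\ $u\in C^{k,\alpha}(X)$. That suffices if the uniqueness in the statement is read inside that ansatz, but for the uniqueness the paper intends you should either quote the Schwarz lemma as the paper does, or supply a separate argument that any complete K\"ahler-Einstein metric of the form $\omega+\ddbar u$ on $X$ is uniformly equivalent to $\omega$ with $u$ bounded --- which is exactly what the Schwarz lemma provides. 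Everything else in your sketch matches the classical proof the theorem's attribution points to.
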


The uniqueness follows from Yau's Schwarz lemma. To prove the existence, one solves the complex Monge-Amp\`ere equation
\beq\label{eqn:main}
\log\frac{(\omega+\ddbar u)^n}{\omega^n}-u=f,
\eeq
where $f$ is a smooth function on $X$ such that $Ric(\omega)+\omega=\ddbar f$. As in \cite{Ko} and \cite{TY}, $f\in C^{k,\alpha}(X)$ for any $k\in\N$ and $0<\alpha<1$. In fact, if we write the bundle metric $h$ on $L$ locally as $e^{-\varphi}/\phi$ where  $\Phi=(\sqrt{-1})^n\phi dz_1\wedge d\bar z_1\dots dz_n\wedge d\bar z_n$ is a smooth volume form on $\barX$,  then $f$ can be chosen as $\log\frac{\Phi}{\Psi}$, where and $\Psi=\|s\|^2(-\log\|s\|^2)^2\omega^n$. Since $D$ is smooth and locally $s=z_n$, a direct computation shows that $\Psi$ extends to a continuous volume form on $\barX$, hence $f$ extends to a continuous function on $\barX$. In fact $f|_D$ is a smooth function on $D$.

\begin{lemma}\label{lem-f}
If we choose the bundle metric on $L$ such that $\sqrt{-1}\Theta_L |_D$ is the canonical K\"ahler-Einstein metric $\omega_D$ satisfying $Ric(\omega_D)=-\omega_D$,
\footnote{This is always possible by \cite{Sch}.}  and denote $\sigma:=-\log(\|s\|^2)$, then there is a constant $c_0$ such that $f=-c_0+O(\sigma^{-1})$ in a neighborhood of $D$.  
\end{lemma}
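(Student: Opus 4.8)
\emph{The plan is to compute the volume form $\omega^n$ explicitly near $D$ and read off the leading behaviour of $f=\log(\Phi/\Psi)$.} I would first decompose the Carlson--Griffiths metric as $\omega=A+B$, where
$$A:=\sqrt{-1}\Theta_L-\frac{2}{\sigma}\,\ddbar\varphi,\qquad B:=\frac{2\sqrt{-1}\,\partial\sigma\wedge\bar\partial\sigma}{\sigma^2},$$
using $\ddbar\log\sigma=\sigma^{-1}\ddbar\sigma-\sigma^{-2}\sqrt{-1}\partial\sigma\wedge\bar\partial\sigma$ and $\ddbar\sigma=\ddbar\varphi$ (the curvature of $\cO(D)$) away from $D$, since $\ddbar\log|z_n|^2=0$ there. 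As $B$ has rank one, $B\wedge B=0$, so $\omega^n=A^n+nA^{n-1}\wedge B$ exactly, and thus
$$\sigma^2\omega^n=\sigma^2A^n+2n\,A^{n-1}\wedge\sqrt{-1}\,\partial\sigma\wedge\bar\partial\sigma.$$
In coordinates with $D=\{z_n=0\}$ one has $\partial\sigma=-dz_n/z_n+\partial\varphi$, hence $|z_n|^2\sqrt{-1}\,\partial\sigma\wedge\bar\partial\sigma=\sqrt{-1}\,dz_n\wedge d\bar z_n+O(|z_n|)$. The decisive quantitative input is that $|z_n|^2=\|s\|^2e^{\varphi}=e^{\varphi-\sigma}$ is exponentially small in $\sigma$, so any term carrying an extra factor $z_n$ or $\bar z_n$ is $O(e^{-\sigma/2})$ and hence negligible against every power of $\sigma^{-1}$.

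Multiplying by $\|s\|^2=|z_n|^2e^{-\varphi}$, the term $\|s\|^2\sigma^2A^n=O(e^{-\sigma}\sigma^2)$ drops out and I obtain
$$\Psi=\|s\|^2\sigma^2\omega^n=2n\,e^{-\varphi}\,A^{n-1}\wedge\sqrt{-1}\,dz_n\wedge d\bar z_n+O(e^{-\sigma/2}),$$
where only the part $A_{\mathrm{tan}}$ of $A$ tangent to $D$ survives the wedge with $\sqrt{-1}\,dz_n\wedge d\bar z_n$. Expanding $A_{\mathrm{tan}}=(\sqrt{-1}\Theta_L)|_{TD}-\tfrac{2}{\sigma}(\ddbar\varphi)|_{TD}$ gives $A_{\mathrm{tan}}^{n-1}=\big((\sqrt{-1}\Theta_L)|_{TD}\big)^{n-1}+O(\sigma^{-1})$, and since $(\sqrt{-1}\Theta_L)|_{TD}=\omega_D+O(|z_n|)$ by the hypothesis $\sqrt{-1}\Theta_L|_D=\omega_D$, I conclude that $\Psi$ extends to $D$ with
$$\Psi|_D=2n\,e^{-\varphi}\,\omega_D^{n-1}\wedge\sqrt{-1}\,dz_n\wedge d\bar z_n\big|_D,\qquad \Psi=\Psi|_D\cdot\big(1+O(\sigma^{-1})\big),$$
the smooth $z_n$-dependence of $\varphi,\phi$ and $\Theta_L$ contributing only $O(e^{-\sigma/2})$.

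\emph{The conceptual heart, and the step I expect to be the main obstacle, is showing that $(\Phi/\Psi)|_D$ is a genuine constant on $D$, not merely a smooth positive function;} this is exactly where adjunction and the Einstein condition must be used, and it explains the hypothesis of the lemma. By adjunction $L|_D=(K_{\barX}+D)|_D=K_D$, and the chosen metric $h$ on $L$ restricts to a metric on $K_D$ of curvature $\sqrt{-1}\Theta_L|_D=\omega_D$; on the other hand $\omega_D$ itself induces a metric on $K_D$ of curvature $-Ric(\omega_D)=\omega_D$. Two metrics on the line bundle $K_D$ with equal curvature differ by a positive constant, their ratio being pluriharmonic on the compact connected $D$. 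In local weights, with $h=e^{-\varphi}/\phi$ so that the weight of $h$ is $\varphi_L=\varphi+\log\phi$, this reads $\varphi_L|_D=\log\det(g_D)+\mathrm{const}$, where $\det(g_D)$ is the coefficient of $\omega_D^{n-1}/(n-1)!$ relative to $\prod_{j=1}^{n-1}(\sqrt{-1}\,dz_j\wedge d\bar z_j)$. Substituting into
$$\frac{\Phi}{\Psi}\Big|_D=\frac{\phi\,e^{\varphi}}{2n(n-1)!\,\det(g_D)}\Big|_D=\frac{e^{\varphi_L}}{2n(n-1)!\,\det(g_D)}\Big|_D$$
cancels $\det(g_D)$ and leaves a constant, so $f|_D=\log(\Phi/\Psi)|_D=:-c_0$ is constant.

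Combining the two expansions gives $\Phi/\Psi=e^{-c_0}\big(1+O(\sigma^{-1})\big)$, whence $f=-c_0+\log\big(1+O(\sigma^{-1})\big)=-c_0+O(\sigma^{-1})$ near $D$. The same bookkeeping after differentiating shows the error is $O(\sigma^{-1})$ together with its derivatives, since every correction is either a smooth function times a negative power of $\sigma$ or is exponentially small in $\sigma$. Thus the only non-routine ingredient is the constancy of the leading coefficient, which rests entirely on the adjunction isomorphism $L|_D\cong K_D$ together with $Ric(\omega_D)=-\omega_D$; the remaining steps are a careful but mechanical expansion in powers of $\sigma^{-1}$.
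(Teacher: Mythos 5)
Your proof is correct and takes essentially the same route as the paper's: compute $\Psi$ along $D$ to identify $f|_D=\log\phi+\varphi-\log\det(g^D_{\alpha\bar\beta})+c_n$, then use $\sqrt{-1}\Theta_L|_D=\omega_D$ and $Ric(\omega_D)=-\omega_D$ to conclude that $f|_D$ is pluriharmonic, hence constant on the compact $D$. Your adjunction/equal-curvature phrasing of the constancy step is the same mechanism as the paper's direct computation $\ddbar f|_D=\omega_D+Ric(\omega_D)=0$, and your rank-one decomposition $\omega=A+B$ merely fills in the detail behind the paper's assertion that $f-f|_D=O(\sigma^{-1})$.
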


\begin{proof} We can compute in local coordinates. First, it is easy to see that $f-f|_D=O(\sigma^{-1})$. So it suffices to show $f|_D\equiv -c_0$ for some constant $c_0$.

 By direct computation, we have
\beqs
\Psi(z',0) &=& 2ne^{-\varphi(z',0)}\big(\sqrt{-1}\Theta_L|_D\big)^{n-1}\wedge \big(\sqrt{-1}dz_n\wedge d\bar z_n\big)\\
&=& 2ne^{-\varphi(z',0)}\omega_D^{n-1}\wedge \big(\sqrt{-1}dz_n\wedge d\bar z_n\big),
\eeqs
and
$$\Phi(z',0)=\phi(z',0)(\sqrt{-1})^n dz_1\wedge d\bar z_1\wedge\dots\wedge dz_n\wedge d\bar z_n.$$
So we have
\beqs
f |_D (z',0) &=& \log \phi(z',0)+\varphi(z',0)-\log\det(g^D_{\alpha\bar\beta})(z',0) +c_n.
\eeqs
So we have
\beqs
\sqrt{-1}\partial_D\bar\partial_D f|_D &=& (\sqrt{-1}\Theta_L)|_D+Ric(\omega_D) \\
&=&\omega_D +Ric(\omega_D)=0.
\eeqs
So we have $f|_D\equiv - c_0$ for some constant $c_0$. \qed
\end{proof}

Using this Lemma, we can find the leading order behavior of $u$ near $D$:

\begin{lemma}\label{lem-u-c0}
For the same constant $c_0$ as above, we have $u=c_0+O(\sigma^{-1}\log\sigma)$.
\end{lemma}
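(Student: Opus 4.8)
The plan is to squeeze $u$ between the two ODE-barriers $c_0\pm A\sigma^{-1}\log\sigma$, for a large constant $A$, by means of the maximum principle. First I would rewrite the Monge--Amp\`ere equation \eqref{eqn:main} as a two-sided differential inequality. Since $\log\det$ is concave on positive Hermitian matrices, comparing $\omega_{KE}=\omega+\ddbar u$ with the reference $\omega$ gives $\log\frac{\omega_{KE}^n}{\omega^n}\le \Delta_\omega u$, where $\Delta_\omega u:=\mathrm{tr}_\omega\ddbar u$, while comparing $\omega=\omega_{KE}-\ddbar u$ with $\omega_{KE}$ gives $\log\frac{\omega_{KE}^n}{\omega^n}\ge \Delta_{\omega_{KE}}u$. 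Writing $w:=u-c_0$ and $h:=c_0+f$, Lemma \ref{lem-f} yields $h=O(\sigma^{-1})$ near $D$, and \eqref{eqn:main} becomes $\Delta_{\omega_{KE}}w\le w+h\le \Delta_\omega w$. I would use the $\Delta_\omega$-side for the upper bound on $w$ and the $\Delta_{\omega_{KE}}$-side for the lower bound.

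Next I would identify the model radial operator. From the Carlson--Griffiths form of $\omega$, the Poincar\'e-type growth in the normal direction is $g^{n\bar n}\sim\tfrac12\sigma^2|z_n|^2$, so for a radial function $\phi=\phi(\sigma)$ one computes $\Delta_\omega\phi=\tfrac{\sigma^2}{2}\phi''(\sigma)+o(\sigma^{-1})$, with all tangential, cross, zeroth-order (from $\ddbar\log\sigma$ and $\ddbar\varphi$) and nonlinear contributions being genuinely lower order. The relevant model is thus $L_0:=\tfrac{\sigma^2}{2}\partial_\sigma^2-1$. The decisive point, which is exactly the mechanism producing the logarithm in the statement, is that $\sigma^{-1}$ is a homogeneous solution, $L_0(\sigma^{-1})=0$, so the resonance with the $O(\sigma^{-1})$ source $h$ forces a particular solution carrying a $\log$: one checks $L_0(\sigma^{-1}\log\sigma)=-\tfrac32\sigma^{-1}$. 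Hence, for $A$ large, $\Delta_\omega\big(w-A\sigma^{-1}\log\sigma\big)\ge\big(w-A\sigma^{-1}\log\sigma\big)$ holds in a region $\{\sigma>\sigma_1\}$, i.e.\ $c_0+A\sigma^{-1}\log\sigma$ is a genuine supersolution (and $c_0-A\sigma^{-1}\log\sigma$ a subsolution).

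To convert the barriers into a global estimate I would invoke Cheng--Yau's maximum principle on the complete manifold $(X,\omega)$, whose bounded geometry supplies the required Ricci lower bound. The finite inner boundary $\{\sigma=\sigma_1\}$ is dealt with by extending $\sigma^{-1}\log\sigma$ to a smooth function $\rho\ge0$ on $X$ that equals $\sigma^{-1}\log\sigma$ near $D$ and is bounded below by a constant $c_1>0$ on the compact region $\{\sigma\le\sigma_1\}$. Applying the maximum principle to $w-A\rho$ (which is bounded above since $u\in C^0$), the near-maximizing sequence either escapes to $D$, where $\Delta_\omega(w-A\rho)\ge (w-A\rho)$ forces the supremum to be $\le0$, or it stays in the compact region, where $A\rho\ge Ac_1$ dominates the bounded $w$ once $A$ is chosen large; either way $w\le A\sigma^{-1}\log\sigma$ near $D$, and the symmetric argument gives the matching lower inequality.

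The hard part is the lower bound, because there the driving operator is $\Delta_{\omega_{KE}}$, whose fine normal asymptotics are not yet available at this stage; since $\sigma^{-1}\log\sigma$ nearly solves the homogeneous model equation, the surviving term is only of size $\sigma^{-1}$, and an uncontrolled error in $\Delta_{\omega_{KE}}\rho$ at order $o(\sigma^{-1}\log\sigma)$ would swamp it. The remedy is a short bootstrap: I would first run the maximum principle to obtain the qualitative leading behavior $u\to c_0$, and then use the uniform $C^{k,\alpha}(X)$ bounds of Theorem \ref{existence} together with interpolation on the quasi-coordinate charts---interpolating $C^2$ between $C^0$ and $C^{m,\alpha}$ with $m$ large so that the interpolation exponent exceeds $\tfrac12$---to show that $\ddbar u$ is of sufficiently low order in the quasi-coordinates. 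This makes $\Delta_{\omega_{KE}}$ agree with $\Delta_\omega$ to the order needed for the resonance computation to survive, after which the lower barrier closes exactly as the upper one. Keeping track of the genuinely lower-order error terms in this comparison (the cross terms, the $\ddbar\varphi$ contributions, and the quadratic remainder from $\log\det$) is the only remaining, and essentially routine, bookkeeping.
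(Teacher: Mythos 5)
Your upper-bound argument is sound, and your resonance computation $L_0(\sigma^{-1})=0$, $L_0(\sigma^{-1}\log\sigma)=-\tfrac32\sigma^{-1}$ is exactly the mechanism the paper uses (it is the $-\tfrac32 b\sigma^{-1}$ term in its proof). The genuine gap is in your lower bound. Your proposed bootstrap is circular. First, the "qualitative leading behavior $u\to c_0$" that is supposed to start the bootstrap is itself a lower bound near $D$, and within your linearized framework it faces exactly the same obstruction: the only a priori information on $\omega_{KE}$ at this stage is metric equivalence $C^{-1}\omega\le\omega_{KE}\le C\omega$ (Theorem \ref{existence}), and since the complex Hessian of any admissible barrier ($\sigma^{-\beta}$ or $\sigma^{-1}\log\sigma$) is \emph{not} sign-definite, $\mathrm{tr}_{\omega_{KE}}\,\ddbar(\mathrm{barrier})$ can have either sign with magnitude comparable to the barrier itself, which destroys the zeroth-order margin; the global estimate $\sup_X|u-c_0|\le\sup_X|f+c_0|$ gives boundedness but no decay near $D$. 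Second, even granting $u\to c_0$ with no rate, interpolation only yields $|\ddbar u|_\omega=o(1)$ with no rate, so the discrepancy $\Delta_{\omega_{KE}}\rho-\Delta_\omega\rho=O(|\ddbar u|_\omega\,|\ddbar\rho|_\omega)$ is merely $o(\sigma^{-1}\log\sigma)$ --- precisely the error you admit would swamp $\tfrac32A\sigma^{-1}$. To push the interpolation exponent past $\tfrac12$ you need a quantitative two-sided bound of the form $|u-c_0|=O(\sigma^{-1/2-\delta})$, which on the lower-bound side is (a weak form of) the statement being proved.

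The way out, and what the paper actually does, is to never linearize at $u$. Apply Cheng--Yau's maximum principle (Lemma \ref{lem-CY}) directly to $M_\pm:=u-c_0\pm a\,\sigma^{-1}\log\sigma$ on a neighborhood of $D$. At a near-extremum sequence $p_i$ the Hessian sign information gives, say for the lower bound, $\ddbar u\ge -a\,\ddbar(\sigma^{-1}\log\sigma)-o(1)$ at $p_i$; monotonicity of $\log\det$ on positive Hermitian matrices then yields
\begin{align*}
\log\frac{(\omega+\ddbar u)^n}{\omega^n}(p_i)\;\ge\;\log\frac{\bigl(\omega-a\,\ddbar(\sigma^{-1}\log\sigma)\bigr)^n}{\omega^n}(p_i)-o(1),
\end{align*}
and the right-hand side is an \emph{explicit} perturbation of $\omega$: since $|\ddbar(\sigma^{-1}\log\sigma)|_\omega=O(\sigma^{-1}\log\sigma)$ is small near $D$, it expands as $-a\bigl(\sigma^{-1}\log\sigma-\tfrac32\sigma^{-1}\bigr)+o(\sigma^{-1})$. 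Substituting this into the Monge--Amp\`ere equation \eqref{eqn:main}, the $\pm a\,\sigma^{-1}\log\sigma$ terms cancel and only $\tfrac32 a\sigma^{-1}$ competes against $f+c_0=O(\sigma^{-1})$ (Lemma \ref{lem-f}), so taking $a$ large closes the argument. Both bounds are handled symmetrically this way; no asymptotics of $\Delta_{\omega_{KE}}$, no decay of $\ddbar u$, and no interpolation are needed. Your upper bound is in fact the linearized shadow of this argument; the nonlinear version is what makes the lower bound come for free.
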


\begin{proof}
The main tool of our proof is the following version of Cheng-Yau's maximum principle in \cite{TY}:
\begin{lemma}[Cheng-Yau's Maximum Principle]\label{lem-CY}
Let $(M^n,g)$ be a complete Riemannian manifold with sectional curvature bounded from below. Let $\varphi$ be a smooth function on $M$ such that $\sup_M \varphi<\infty$. Then there exist a sequence of points $\{p_i\}\subset M$ such that 
$$\lim_i \varphi(p_i)=\sup_M\varphi,\quad \lim_i |\nabla\varphi|_g(p_i)=0,\quad \lim\sup_i Hess\ \varphi (p_i)\leq 0.$$
\end{lemma}

 Take a sufficiently small neighborhood $U$ of $D$ such that $\sigma^{-1}\log\sigma$ is strictly positive on $\partial U$. We shall find large positive constants $a,b$ such that 
$$-a \sigma^{-1}\log\sigma \leq u-c_0 \leq b\sigma^{-1}\log\sigma$$
in $U$. For this, we use the test functions $M_+:=u-c_0+a \sigma^{-1}\log\sigma$  and $M_-:=u-c_0-b \sigma^{-1}\log\sigma$, with constants $a, b>0$ to be determined later. We take $M_-$ for example, and the discussion for $M_+$ is the same.

First we assume $M_-<0$ on $\partial U$. This is true if $b$ is large enough, since $u$ is bounded. If $\sup_{U\setminus D} M_-\geq 0$, by Cheng-Yau's maximum principle, we can find a sequence $p_i\in U\setminus D$ such that $M_-(p_i)\to \sup_{U\setminus D} M_-$ and 
$$\lim\sup_i Hess M_-(p_i)\leq 0.$$
So we have
\beqs
\sup M_-&=&\lim (f+u)(p_i)+\lim(-c_0-f-b\sigma^{-1}\log \sigma)(p_i)\\
&=& \lim \log \frac{(\omega+\ddbar u)^n}{\omega^n}(p_i)+\lim(-c_0-f-b\sigma^{-1}\log \sigma)(p_i)\\
&\leq& \lim \log \frac{(\omega+b\ddbar \sigma^{-1}\log\sigma)^n}{\omega^n}(p_i)+\lim(-c_0-f-b\sigma^{-1}\log \sigma)(p_i)\\
&\leq & \lim \Big(-c_0-f(p_i)-\big(\frac{3}{2}b\sigma^{-1}+o(\sigma^{-1})\big)(p_i)\Big).
\eeqs
Since $f+c_0=O(\sigma^{-1})$, for $b$ large enough, we must have $M_-\leq 0$.  \qed
\end{proof}

For the higher derivatives of $u$, we have:

\begin{lemma}\label{lem-v_higher}
Under the same assumption as Lemma \ref{lem-f}, for a solution $u$ of \eqref{eqn:main}, we have
\beq \label{eq-nablaKu}
|\nabla^k u|_\omega \leq C_k \sigma^{-1} \log \sigma
\eeq
for any integer $k\geq 1$.
\end{lemma}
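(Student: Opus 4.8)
The plan is to run a bootstrap argument on Cheng-Yau's quasi-coordinate charts, using the bounded-geometry structure of $(X,\omega)$ to convert covariant-derivative bounds into Euclidean interior estimates, and then to extract the decay rate $\sigma^{-1}\log\sigma$ from \emph{linear} Schauder estimates rather than from the fully nonlinear equation directly. First I would reduce to the function $v:=u-c_0$, so that $\nabla^k u=\nabla^k v$ for every $k\geq 1$, and recall from Lemma \ref{lem-u-c0} that $|v|\leq C\sigma^{-1}\log\sigma$. Fix a quasi-coordinate chart $\Phi$ with $z_n=\exp\big(\tfrac{1+\eta}{1-\eta}\cdot\tfrac{w+1}{w-1}\big)$, centered so that $\sigma\asymp\sigma_0:=\tfrac{1+\eta}{1-\eta}$; since $|w|\le\tfrac23$ and $w\mapsto\tfrac{w+1}{w-1}$ has image in a fixed compact subset of the left half-plane, $\sigma$ is comparable to $\sigma_0$ (and $\log\sigma$ to $\log\sigma_0$) throughout the chart, so it suffices to bound the Euclidean norms $|\partial^j\tilde v|$, $1\le j\le k$, by $C\sigma_0^{-1}\log\sigma_0$. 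Indeed $|\nabla^k u|_\omega$ is a bounded combination of such Euclidean derivatives because $\tilde g:=\Phi^*\omega$ is uniformly equivalent to the flat metric with all derivatives bounded, and on this chart the pulled-back equation \eqref{eqn:main} is uniformly elliptic, since $\omega_{KE}\sim\omega\sim$ Euclidean by Theorem \ref{existence}.

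For the base case I would first invoke Evans-Krylov together with the standard Schauder theory to obtain a \emph{uniform} (non-decaying) bound $\|\tilde v\|_{C^{2,\alpha}}\le C$; this is exactly the bounded-geometry statement and serves only to freeze the coefficients. Writing the Monge-Amp\`ere operator in integrated linearized form gives $a^{i\bar j}\tilde v_{i\bar j}-\tilde v=\tilde f+c_0$, with $a^{i\bar j}=\int_0^1 g_{t\tilde v}^{i\bar j}\,dt$ uniformly elliptic and of class $C^{0,\alpha}$. Now I would apply the \emph{linear} interior Schauder estimate, which is smallness-preserving: with $\|\tilde v\|_{C^0}=O(\sigma_0^{-1}\log\sigma_0)$ and $\|\tilde f+c_0\|_{C^{0,\alpha}}=O(\sigma_0^{-1})$ (the latter from Lemma \ref{lem-f} together with $f\in C^{k,\alpha}(X)$, so that its quasi-coordinate derivatives inherit the $\sigma^{-1}$ decay), one gets $\|\tilde v\|_{C^{2,\alpha}}=O(\sigma_0^{-1}\log\sigma_0)$. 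This already establishes the cases $k=1,2$.

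The higher-order cases follow by induction on $k$. Differentiating \eqref{eqn:main} once, the function $w:=\partial_\gamma\tilde v$ solves a linear elliptic equation $g_{\omega_{KE}}^{i\bar j}w_{i\bar j}-w=\partial_\gamma(\tilde f+c_0)+\big(g_{\tilde g}^{i\bar j}-g_{\omega_{KE}}^{i\bar j}\big)\partial_\gamma\tilde g_{i\bar j}$; here the metric difference is $O(\sigma_0^{-1}\log\sigma_0)$ by the previous step while $\partial\tilde g$ is uniformly bounded by bounded geometry, so the entire right-hand side is $O(\sigma_0^{-1}\log\sigma_0)$ in $C^{0,\alpha}$, and Schauder gives $\|w\|_{C^{2,\alpha}}=O(\sigma_0^{-1}\log\sigma_0)$, i.e. the bound for $|\nabla^3 u|_\omega$. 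Assuming the estimate up to order $k$, differentiating $k-1$ times produces a linear equation for a $(k-1)$-st derivative of $\tilde v$ whose coefficients depend only on lower-order (hence bounded) derivatives of $\tilde v$ and $\tilde g$, and whose source decays at the rate $\sigma^{-1}\log\sigma$; Schauder then closes the induction.

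The main obstacle is precisely the smallness-preservation with the \emph{correct} rate. Evans-Krylov applied to the nonlinear equation only yields $|\nabla^2 u|_\omega\le C$, which is far weaker than the claimed decay; the decay must come from the linear Schauder estimate after the coefficients have been frozen, and one must check at each differentiation that the error terms always carry a genuinely decaying factor (a derivative of $\tilde v$, or the metric difference $\omega_{KE}-\omega$) multiplied by uniformly bounded geometric quantities, so that the rate $\sigma^{-1}\log\sigma$ never deteriorates. A secondary point to verify is that $\sigma$ is comparable to its central value $\sigma_0$ on each chart, which, together with the bounded-geometry control of $\nabla\sigma$, is what lets one upgrade the chartwise $\sigma_0$-bounds to the pointwise statement in the lemma.
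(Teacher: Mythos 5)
Your overall strategy coincides with the paper's: reduce to $v=u-c_0$, write the Monge-Amp\`ere equation in integrated linearized form $A^{\bar ji}v_{i\bar j}-v=f+c_0$ on quasi-coordinate charts, where the coefficients are uniformly elliptic with uniform bounds on all derivatives (this is Theorem \ref{existence}; your appeal to Evans--Krylov is redundant but harmless), and then let linear interior Schauder estimates convert the $C^0$ decay $v=O(\sigma^{-1}\log\sigma)$ and the decay of the source term into decay of all derivatives. However, there is a genuine gap at exactly the point where the paper spends most of its proof: your assertion that $\|\tilde f+c_0\|_{C^{k,\alpha}}=O(\sigma_0^{-1})$ on each chart follows ``from Lemma \ref{lem-f} together with $f\in C^{k,\alpha}(X)$, so that its quasi-coordinate derivatives inherit the $\sigma^{-1}$ decay.'' This inheritance is false as a general principle: a function that is $O(\epsilon)$ in $C^0$ and bounded in $C^2$ need only be $O(\sqrt{\epsilon})$ in $C^1$ (consider $\epsilon\sin(t/\sqrt{\epsilon})$), and interpolating between the $C^0$ decay and uniform $C^{k,\alpha}$ bounds yields at best $O(\sigma^{-1+\delta})$ for derivatives and H\"older seminorms of $f+c_0$. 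Feeding that into Schauder gives only $|\nabla^k u|_\omega=O(\sigma^{-1+\delta})$, strictly weaker than the claimed rate $\sigma^{-1}\log\sigma$; note this loss already occurs in your base case $k=1,2$, since the Schauder estimate needs the $C^{0,\alpha}$ norm, not just the sup norm, of the right-hand side to decay.

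What makes the derivative decay actually true is a structural property of $f$, not soft interpolation: $f$ is smooth as a function of the tangential coordinates and of $\hat x=(-\log|z_n|^2)^{-1}$, so each quasi-coordinate derivative of $f+c_0$ can be written as $(\log|z_n|^2)^{-1}a(z',w)$ with $a$ having bounded derivatives, and one checks directly that $\partial_w(-\log|z_n|^2)^{-1}=O((-\log|z_n|^2)^{-1})$, so differentiation in $w$ (and trivially in $z'$) preserves the $O(\sigma^{-1})$ rate. This is precisely the Claim inside the paper's proof, established by induction on the number of derivatives. Your induction on the order of differentiation of the equation (treating $\partial_\gamma\tilde v$ as the solution of a linear equation with source $\partial_\gamma(\tilde f+c_0)$ plus a term carrying the metric difference) is a perfectly fine alternative to the paper's one-shot application of higher-order Schauder theory with smooth coefficients, but it runs into the same missing ingredient: the decay of $\partial_\gamma(\tilde f+c_0)$ is exactly what has not been proved. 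Once the Claim about derivatives of $f+c_0$ is supplied, either your version or the paper's closes the argument.
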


\begin{proof}
Denote $v= u-c_0$. By Lemma \ref{lem-u-c0}, $v=O(\sigma^{-1}\log \sigma)$. And $v$ satisfies the equation
\beqs
\log\frac{(\omega+\ddbar v)^n}{\omega^n}-v=f+c_0.
\eeqs
In quasi-coordinates, we can rewrite the equation as 
\beqs
A^{\bar j i} \partial_i\partial_{\bar j} v-v=f+c_0,
\eeqs
where 
$$A^{\bar j i}=\int_0^1 \Big((g_{k\bar l}+t u_{k\bar l})^{-1}\Big)^{\bar j i} dt.$$
By Theorem \ref{existence}, we can view this as a uniformly elliptic linear equation on $v$ with smooth coefficients. Since by Lemma \ref{lem-u-c0}, $v=O(\sigma^{-1}\log \sigma)$, the lemma follows from classical interior Schauder estimates if we have the following:
\begin{claim}
For any integer $k\geq 0$, we have
$$|\nabla^k(f+c_0)|_\omega=O(\sigma^{-1}).$$
\end{claim}
We shall prove this by mathematical induction. The $k=0$ case is proved in Lemma \ref{lem-f}. Now we assume $|\nabla^i(f+c_0)|_\omega=O(\sigma^{-1})$ for $i=0,\dots k-1$. In quasi-coordinates $(z_1,\dots, z_{n-1}, w)=:(z',w)$, we have $\partial^i_{z',\bar z'}\partial^j_{w,\bar w} (f+c_0)=O(\sigma^{-1})$ for all multi-index $i$ and integer $j$ such that  $|i|+j\leq k-1$. Now for any $|i|+j=k-1$, we have
$$\partial_{z', \bar z'}^i\partial_{w,\bar w}^j(f+c_0)=(\log|z_n|^2)^{-1} a(z',w).$$
Since $f$ is in fact smooth with respect to $\hat x:=(-\log |z_n|^2)^{-1}$, we conclude that any derivatives of $a(z', w)$ with respect to the coordinates $(z',w)$ are still bounded.
If we take another $z'$ or $\bar z'$ derivative, then obviously we still have $O(\sigma^{-1})$. On the other hand, it is direct to check that
$$\frac{\partial}{\partial w}(-\log |z_n|^2)^{-1}=O((-\log |z_n|^2)^{-1}).$$
So we have  $\partial^i_{z',\bar z'}\partial^j_{w,\bar w} (f+c_0)=O(\sigma^{-1})$ for all multi-index $i$ and integer $j$ such that  $|i|+j\leq k$.
 \qed
\end{proof}

Choose local holomorphic coordinates $(z_1,\dots,z_n)$ such that locally, $D=\{z_n=0\}$. Write $z_n:=re^{i\theta}$. At this moment, we define
$$x= (-\log r^2)^{-1}.$$
Another observation about $f$ and $u$, which is of crucial importance for our later discussions, is that they are essentially independent of $\theta$:
\begin{lemma}\label{lem:theta}
We have
$$\partial^k_\theta f=\bold{O}(x^\infty),  \partial^k_\theta u=\bold{O}(x^\infty)$$
for any $k\geq 1$. Here $\bold{O}(x^\infty)$ means a function $\psi$ satisfying
$$|\partial^k_{z',\bar z'}\partial^l_x\psi|\leq C_{k,l,N} x^N$$
for any $k\geq 0, l\geq 0$ and $N\in \N$.
\end{lemma}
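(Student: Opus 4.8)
The plan is to prove the two assertions separately: the one for $f$ follows from the structure already recorded in Lemma \ref{lem-v_higher}, while the one for $u$ must be extracted from the Monge-Amp\`ere equation \eqref{eqn:main}.

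First I would treat $f$. By the Claim inside the proof of Lemma \ref{lem-v_higher}, $f$ is smooth when written in the variables $(z',\bar z',x,z_n,\bar z_n)$ with $x=(-\log|z_n|^2)^{-1}$, i.e.\ $f=F(z',\bar z',x,z_n,\bar z_n)$ with $F$ smooth up to $\{x=0\}$. Since $x$ depends only on $r=|z_n|$, it is $\theta$-independent, so the rotation field $\partial_\theta=i(z_n\partial_{z_n}-\bar z_n\partial_{\bar z_n})$ differentiates $F$ only in its last two slots and $\partial_\theta f=i(z_nF_{z_n}-\bar z_nF_{\bar z_n})$ carries an explicit factor $z_n$ or $\bar z_n$ of size $r=e^{-1/(2x)}$. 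The admissible derivatives preserve this exponential smallness: $\partial_{z',\bar z'}$ hits only the smooth slots, and $\partial_x z_n=\frac{z_n}{2x^2}$ merely introduces powers of $1/x$ while keeping the $z_n$-factor. Iterating and using $r\,x^{-m}=e^{-1/(2x)}x^{-m}\le C_{m,N}x^N$, I obtain $\partial_\theta^kf=\bold{O}(x^\infty)$ for all $k\ge1$.

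For $u$ I would differentiate \eqref{eqn:main} by $\partial_\theta$. Writing the metrics locally via K\"ahler potentials $\omega=\ddbar P$ and $\omega_{KE}=\ddbar(P+u)$ and using $[\partial_\theta,\partial_{z_j}]=-i\delta_{jn}\partial_{z_n}$, $[\partial_\theta,\partial_{\bar z_j}]=i\delta_{jn}\partial_{\bar z_n}$, one checks that for any potential $\Phi$ the algebraic terms in $\partial_\theta\log\det(\Phi_{k\bar l})$ contract to $\delta^{\bar n}_{\bar n}-\delta^{n}_{n}=0$ and cancel, leaving $\partial_\theta\log\det(\Phi_{k\bar l})=\Phi^{\bar lk}\partial_k\partial_{\bar l}(\partial_\theta\Phi)$. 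Hence $w:=\partial_\theta u$ solves the linear uniformly elliptic equation $\tilde g^{\bar lk}w_{k\bar l}-w=S$ with $S=\partial_\theta f-(\tilde g^{\bar lk}-g^{\bar lk})(\partial_\theta P)_{k\bar l}$, where $\tilde g$ is the KE metric (equivalent to $\omega$ by Theorem \ref{existence}). I would then show $S=\bold{O}(x^\infty)$: the first term is the case just treated, and in the second $\partial_\theta P$ again carries an explicit $z_n$ or $\bar z_n$ factor (the singular $1/\sigma$ parts cancel), so that although the normal derivatives in $(\partial_\theta P)_{k\bar l}$ produce inverse powers of $z_n$, these are absorbed by the smallness of the normal components of $\tilde g^{\bar lk}-g^{\bar lk}$. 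This last estimate uses the asymptotic block structure of the metrics near $D$ and is where the appendix computations are needed.

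The main obstacle is deducing $w=\bold{O}(x^\infty)$ from $\tilde g^{\bar lk}w_{k\bar l}-w=S$. A naive application of Cheng-Yau's principle (Lemma \ref{lem-CY}) does not suffice, because for the cusp Laplacian every positive radially decaying function has $\tilde g^{\bar lk}(\sigma^{-N})_{k\bar l}\approx\tfrac{N(N+1)}2\sigma^{-N}>0$, so $L=\tilde g^{\bar lk}\partial_k\partial_{\bar l}-1$ is positive on it and no radial barrier can force decay. The decay of $w=\partial_\theta u$ comes instead from the angular-momentum term: expanding $w=\sum_{m\ne0}w_m(z',x)e^{im\theta}$ and setting $t=\log\sigma$, the leading normal operator on the $m$-th mode becomes the constant-coefficient operator $\tfrac12(\partial_t^2-\partial_t)-\tfrac{m^2}2e^{2t}-1$, whose large negative potential $-\tfrac{m^2}2e^{2t}=-\tfrac{m^2}2\sigma^2$ forces $w_m\sim e^{-|m|\sigma}=e^{-|m|/x}$. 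I would therefore decompose in $\theta$ (and, if necessary, in eigenmodes of $\Delta_{\omega_D}$), solve the resulting second-order ODEs by the elementary solution formula, and use that the modes of $S$ are exponentially small to conclude that every nonzero $\theta$-mode of $u$ is $\bold{O}(x^\infty)$, with seminorms summable against any power of $m$. Since $\partial_\theta^k u$ consists precisely of these nonzero modes multiplied by $(im)^k$, this yields $\partial_\theta^k u=\bold{O}(x^\infty)$ for all $k\ge1$ simultaneously.
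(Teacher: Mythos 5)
Your proposal takes a genuinely different route from the paper, and parts of it are sound: the argument for $f$ (every $\partial_\theta$ hits an explicit $z_n$ or $\bar z_n$ slot, producing a factor $r=e^{-1/(2x)}$ which beats any power of $x$) is correct; the linearization identity $\partial_\theta\log\det(\Phi_{k\bar l})=\Phi^{\bar lk}(\partial_\theta\Phi)_{k\bar l}$ is correct (the two commutator traces do cancel, as you say); and you have correctly diagnosed that radial barriers cannot give decay beyond the indicial root $x^{1}$, so that any decay of $\partial_\theta u$ must come from the angular-momentum potential on the nonzero Fourier modes.

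The gap is in the final step, and it is a circularity. The equation you derive for $w=\partial_\theta u$ is $\tilde g^{\bar lk}w_{k\bar l}-w=S$, where $\tilde g$ is the K\"ahler--Einstein metric, so its coefficients involve $u_{k\bar l}$ and depend on $\theta$ in a way that is unknown at this stage --- the assertion that this dependence is negligible \emph{is} the lemma being proved. Consequently the Fourier modes $w_m$ do \emph{not} satisfy decoupled ODEs: writing $\tilde g^{\bar lk}$ as its $\theta$-average plus an oscillatory part $b^{\bar lk}$, the mode equations acquire coupling terms $b^{\bar lk}w_{k\bar l}$, and $b^{\bar lk}$ is controlled only by $\partial_\theta \partial\bar\partial u$, i.e.\ by derivatives of $w$ itself. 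The same circularity appears when you need the modes $S_m$ to be small uniformly and summably in $m$: decay in $m$ comes from integration by parts in $\theta$, hence from bounds on $\partial^k_\theta S$, which again involve $\partial^k_\theta\tilde g$. Your scheme can in principle be closed by an iteration (initialized by $|\partial_\theta u|\le|\partial_\theta|_\omega|\nabla u|_\omega=O(x^2\log x)$ from Lemma \ref{lem-v_higher}, and improving the decay at each step), but none of that is set up in the proposal, and it is precisely the hard part. The paper avoids all of this with a soft argument: in Cheng--Yau quasi-coordinates $z_n=\exp\big(\frac{1+\eta}{1-\eta}\cdot\frac{w+1}{w-1}\big)$, both $x^{-1}\partial_\theta$ and $x\partial_x$ are vector fields with bounded smooth coefficients, so $u,f\in C^{k,\alpha}(X)$ for every $k$ (Theorem \ref{existence}) immediately gives $\partial^k_\theta\psi=O(x^k)$; then, since $\partial^{k-1}_\theta\psi$ is a $\theta$-derivative of a periodic function, it has zero mean on the circle and vanishes at some $\theta_0$, so integrating $\partial^k_\theta\psi$ from $\theta_0$ transfers the $O(x^k)$ bound down to $\partial^{k'}_\theta\psi$ for all $1\le k'\le k$; letting $k\to\infty$ yields $\bold{O}(x^\infty)$ for $u$ and $f$ simultaneously, with no linearization, no mode decomposition and no ODE analysis. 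Note that this quasi-coordinate bound is exactly the a priori input that would repair your argument --- but once you have it, the whole ODE machinery becomes unnecessary.
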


\begin{proof} We only need to prove the $l=0$ case. Recall that all the derivatives of $f$ and $u$ with respect to quasi-local coordinates are uniformly bounded. If we choose local holomorphic coordinates $(z_1,\dots, z_n)$ such that $D$ is defined by $z_n=0$, then the quasi-coordinates $(z_1,\dots, z_{n-1}, w)$ can be defined by
$$z_n=exp\Big(\frac{1+\eta}{1-\eta}\cdot\frac{w+1}{w-1}\Big), $$
where $0<\eta<1$ and $|w|\leq \frac{2}{3}$. Then we have
\beqs
\frac{\partial}{\partial\theta}&=& i\big(z_n\frac{\partial}{\partial z_n}-\bar z_n \frac{\partial}{\partial\bar z_n}\big)\\
&=&-i\frac{1-\eta}{2(1+\eta)}\Big((1-w)^2\frac{\partial}{\partial w}-(1-\bar w)^2\frac{\partial}{\partial \bar w}\Big).
\eeqs
 On the other hand, we have
$$x^{-1}\sim \frac{1-|w|^2}{|1-w|^2}.$$
Direct computation shows that
$$x^2\frac{\partial}{\partial x}=Re\Big(z_n\frac{\partial}{\partial z_n}\Big)=\frac{\eta-1}{2(1+\eta)}Re\Big((1-w)^2\frac{\partial}{\partial w}\Big).$$
This implies that the coefficients of $x^{-1}\partial_\theta$ and $x\partial_x$ are bounded and smooth.
So if $\psi$ is in Cheng-Yau's H\"older space $C^{N,\alpha}(M,g_0)$, then $\partial^l_{z',\bar z'}\partial^k_\theta\psi=O(x^k)$ for $k+l\leq N$.  Since any such function $\partial^l_{z',\bar z'}\partial^k_\theta\psi$ must be periodic in $\theta$, we have $\int_0^{2\pi}\partial^l_{z',\bar z'}\partial^{k-1}_\theta\psi   =0 $ when $k-1\geq 1$. This means we can find (for fixed $z'$ and $x$) a $\theta_0$ such that $\partial^l_{z',\bar z'}\partial^{k-1}_\theta\psi(\theta_0)=0$. So we can integrate the $\theta$ variables from $\theta_0$ to get $\partial^l_{z',\bar z'}\partial^{k'}_\theta\psi=O(x^k)$ for any $1\leq k'\leq k$. 

Finally, since both $f$ and $u$ are in all $C^{k,\alpha}(X)$, we get the result. \qed
\end{proof}

\section{Linearized operator under local coordinates}\label{holo coord}

In this section, we shall compute the linearized complex Monge-Amp\`ere operator in local coordinate charts. This will we used in the next two sections to derive the asymptotic expansion.
Choose local holomorphic coordinates $(z_1,\dots,z_n)$ as before, such that locally $D=\{z_n=0\}$. Recall that $z_n:=re^{i\theta}$ and
$$x= (-\log r^2)^{-1}.$$
Also recall that
$$\sigma=-\log\|s\|^2=-\log r^2+\varphi=\frac{1}{x}+\varphi=\frac{1+x\varphi}{x}.$$
If locally $\Theta_L=\vartheta_{i\bar j}dz_i\wedge d\bar z_j$, and write $\omega:=\sqrt{-1}g_{i\bar j}dz_i\wedge d\bar z_j$, then we have 
\beqs
g_{i \bar{j}}&=&\vartheta_{i \bar{j}}-\partial_i \partial_{\bar{j}} \log \Big(-\log r^2+\varphi \Big)^2\\
&=& \vartheta_{i \bar{j}}-\frac{2x\varphi_{i \bar{j}}}{1+x\varphi }+N_i N_{\bar j},
\eeqs
where
\begin{align*}
N_i =\sqrt{2}\frac{(x^{-1})_i+\varphi_i}{x^{-1}+\varphi}= \frac{\sqrt{2}x}{r}\cdot  \frac{-\delta_{in}e^{-\sqrt{-1}\theta}+r\varphi_i}{1+x\varphi}.
\end{align*}

To compute $g^{\bar{j}i}$, set
\begin{align*}
\tilde{\vartheta}_{i\bar{j}}=\vartheta_{i\bar{j}}-\frac{2x\varphi_{i \bar{j}}}{1+x\varphi } 
\end{align*}
which is positive definite near $D$. Then a direct computation shows that

\begin{align*}
g^{\bar j l}&=\tilde{\vartheta}^{\bar j l}- \frac{\tilde{\vartheta}^{\bar j p}N_p \tilde{\vartheta}^{\bar q l}N_{\bar{q}}}{1+\tilde{\vartheta}^{\bar p q}N_{\bar{p}}N_q},\\
&=\tilde{\vartheta}^{\bar j l}- \frac{\tilde{\vartheta}^{\bar j p}\tilde N_p \tilde{\vartheta}^{\bar q l}\tilde N_{\bar{q}}}{\frac{x^2}{2x^2}+\tilde{\vartheta}^{\bar pq}\tilde{N}_{\bar{p}}\tilde{N}_q},
\end{align*}
so we derive
\begin{align}
\begin{split}\label{eq-GAB}
g^{\bar\beta \alpha} &= \tilde\vartheta^{\bar\beta\alpha}-\frac{\tilde\vartheta^{\bar n\alpha}\tilde\vartheta^{\bar\beta n}}{\tilde\vartheta^{\bar n n}}+O(r)=\Big( \vartheta^{\bar\beta\alpha}-\frac{\vartheta^{\bar n\alpha}\vartheta^{\bar\beta n}}{\vartheta^{\bar n n}}\Big)|_D+O(x),\\
g^{\bar nn} &=\frac{r^2}{2x^2}(1+2x\varphi)+O(r^2),\\
g^{\bar n \alpha} &= \Big(\tilde\vartheta^{\bar\beta\alpha}-\frac{\tilde\vartheta^{\bar n\alpha}\tilde\vartheta^{\bar\beta n}}{\tilde\vartheta^{\bar n n}}\Big) \varphi_{\bar\beta}\bar z_n +\frac{\tilde\vartheta^{\bar n\alpha}}{\tilde\vartheta^{\bar nn}}\frac{r^2}{2x^2}(1+2x\varphi)+O(r^2),\\
g^{\bar\beta n} &= \Big(\tilde\vartheta^{\bar\beta\alpha}-\frac{\tilde\vartheta^{\bar n\alpha}\tilde\vartheta^{\bar\beta n}}{\tilde\vartheta^{\bar n n}}\Big) \varphi_{\alpha} z_n +\frac{\tilde\vartheta^{\bar \beta n}}{\tilde\vartheta^{\bar nn}}\frac{r^2}{2x^2}(1+2x\varphi)+O(r^2).
\end{split}
\end{align}
Notice that the $(n-1)\times(n-1)$ matrix
$$\Big( \vartheta^{\bar\beta\alpha}-\frac{\vartheta^{\bar n\alpha}\vartheta^{\bar\beta n}}{\vartheta^{\bar n n}}\Big)|_D$$
on $D$ is exactly the inverse of $(\vartheta_{\alpha\bar\beta})|_D$. We write it as $\eta^{\bar\beta\alpha}$.

For an operator $T_0$, defined as
\begin{align}\label{eq-TForm}
T_0 v= \eta^{\bar{\beta}\alpha} v_{\alpha \bar{\beta}}+ x^2 Re( C_{\alpha \bar{n}}^*\frac{\partial v_{\alpha}}{\partial x})+\frac{1}{2}x^2 v_{xx}+ xv_x,
\end{align}
we say $T=T_0+ O(x)$, if
$T= T_1+ T_\infty$, where $T_1$ has same form \eqref{eq-TForm} as $T_0$, and the coefficients of $T_1$  are $1+O(x)$ times the corresponding coefficients of $T_0$. $T_\infty$ is called a $O(x^\infty)$ operator, satisfying $T_\infty (u)= O(x^\infty)$, where $u$ is a solution of \eqref{eqn:main}. If $T_0$ is of another form, we can define $T_0+O(x)$ in a similar way.

\begin{proposition}\label{lem:Lap_cx}
 When acting on $u$ or function $\psi$ independent of $\theta$, we have
\begin{align*}
g^{\bar{j} i}\partial_i\partial_{\bar j}=\eta^{\bar{\beta}\alpha} \partial_\alpha\partial_{\bar{\beta}}+ x^2 Re( C_{\alpha \bar{n}}^*\frac{\partial^2}{\partial z_\alpha\partial x})+\frac{1}{2}x^2 \partial^2_x+ x\partial_x+O(x),
\end{align*}
  where
$
C^*_{\alpha \bar{n}} = C_{\alpha \bar{n}} (\cos \theta -\sqrt{-1} \sin \theta)
$. Here $C_{i\bar{j}}$ are bounded smooth in $x, \theta$ and other complex coordinates, and $(\eta^{\bar\beta\alpha})_{1\leq \alpha,\beta\leq n-1}$ is the inverse of the $(n-1)\times (n-1)$ matrix $(\vartheta_{\alpha\bar\beta}|_D)$. Note that $\eta^{\bar{\beta}\alpha} u_{\alpha \bar{\beta}}$ is just the Laplacian operator on $D$ with respect to the canonical KE metric. For simplicity, we set $\Delta_D:=\eta^{\bar{\beta}\alpha}\partial_\alpha\partial_{\bar\beta}$.
\end{proposition}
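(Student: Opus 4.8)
The plan is to substitute the inverse-metric expansions \eqref{eq-GAB} into the four index-blocks of $g^{\bar{j}i}\partial_i\partial_{\bar j}$ --- the tangential block $g^{\bar\beta\alpha}\partial_\alpha\partial_{\bar\beta}$, the two mixed blocks $g^{\bar n\alpha}\partial_\alpha\partial_{\bar n}$ and $g^{\bar\beta n}\partial_n\partial_{\bar\beta}$, and the normal block $g^{\bar nn}\partial_n\partial_{\bar n}$ --- and to convert every $z_n$-derivative into $x$- and $\theta$-derivatives. First I would record the chain rule for $z_n=re^{\sqrt{-1}\theta}$, $x=(-\log r^2)^{-1}$: a direct computation gives $\partial_n=\frac{x^2}{z_n}\partial_x-\frac{\sqrt{-1}}{2z_n}\partial_\theta$ together with its conjugate $\partial_{\bar n}=\frac{x^2}{\bar z_n}\partial_x+\frac{\sqrt{-1}}{2\bar z_n}\partial_\theta$. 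Two facts will be used throughout: by Lemma \ref{lem:theta} every $\partial_\theta$-derivative of $u$, and every $\partial_\theta$-derivative of a $\theta$-independent $\psi$, is $\bold{O}(x^\infty)$; and $r^2=e^{-1/x}$ decays faster than any power of $x$, so any $O(r^2)$ quantity is $\bold{O}(x^\infty)$.

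For the normal block I would compute $\partial_n\partial_{\bar n}\psi$ explicitly; the mixed $\partial_x\partial_\theta$ contributions cancel by symmetry and one is left with $\frac{2x^3}{r^2}\partial_x\psi+\frac{x^4}{r^2}\partial_x^2\psi+\frac{1}{4r^2}\partial_\theta^2\psi$. Multiplying by $g^{\bar nn}=\frac{r^2}{2x^2}(1+2x\varphi)+O(r^2)$, the factors $r^2$ cancel and produce precisely $x\partial_x+\frac12 x^2\partial_x^2$ at leading order, the prefactor $(1+2x\varphi)$ contributing admissible $O(x)$ corrections to the two coefficients while the $\partial_\theta^2$-term lands in $T_\infty$. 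I want to stress that the first-order operator $x\partial_x$ is not an accident: it arises from differentiating the coefficient $x^2/\bar z_n$ of $\partial_{\bar n}$ (the term $\partial_n(x^2)\cdot\frac{1}{\bar z_n}\partial_x\psi$), and tracking it carefully is essential.

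For the mixed blocks I would expand $\partial_\alpha\partial_{\bar n}\psi=\frac{x^2}{\bar z_n}\frac{\partial^2\psi}{\partial z_\alpha\partial x}+\frac{\sqrt{-1}}{2\bar z_n}\partial_\alpha\partial_\theta\psi$ and multiply by $g^{\bar n\alpha}$. The leading piece $\big(\tilde\vartheta^{\bar\beta\alpha}-\frac{\tilde\vartheta^{\bar n\alpha}\tilde\vartheta^{\bar\beta n}}{\tilde\vartheta^{\bar nn}}\big)\varphi_{\bar\beta}\bar z_n$ of $g^{\bar n\alpha}$ in \eqref{eq-GAB} cancels the $1/\bar z_n$ exactly, leaving a bounded coefficient times $x^2\frac{\partial^2\psi}{\partial z_\alpha\partial x}$, whereas the subleading $\frac{r^2}{2x^2}$-piece and the $O(r^2)$-remainder become $O(z_n)=\bold{O}(x^\infty)$ after multiplication by $x^2/\bar z_n$. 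Adding the conjugate block $g^{\bar\beta n}\partial_n\partial_{\bar\beta}\psi$, whose coefficient is the complex conjugate of the first so that the two assemble into a real part, yields the term $x^2\,Re\big(C^*_{\alpha\bar n}\frac{\partial^2}{\partial z_\alpha\partial x}\big)$. The precise identification of $C^*_{\alpha\bar n}$ from $\tilde\vartheta^{\bar\beta\alpha}$ and $\varphi_{\bar\beta}$, together with the phase $\cos\theta-\sqrt{-1}\sin\theta$ produced by the bookkeeping of $\bar z_n=re^{-\sqrt{-1}\theta}$, is exactly the computation I would relegate to the appendix. The tangential block needs no conversion: \eqref{eq-GAB} gives $g^{\bar\beta\alpha}=\eta^{\bar\beta\alpha}+O(x)$, hence $g^{\bar\beta\alpha}\partial_\alpha\partial_{\bar\beta}=\Delta_D+O(x)$ at once.

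The hard part will be the bookkeeping of cancellations between factors that individually blow up or vanish: $\partial_{\bar n}$ contributes singular factors $1/\bar z_n$ and $1/r^2$, the metric contributes compensating factors $\bar z_n$ and $r^2$, and the two must be combined before either is estimated. In particular the $\partial_\theta$-generated terms carry a factor $1/r^2$, which grows faster than any power of $1/x$, so discarding them into $T_\infty$ is legitimate only because the compensating metric factors cancel the singular factor first, after which Lemma \ref{lem:theta} applies. Keeping this order of operations straight --- cancel first, estimate second --- is the crux; once it is in place, collecting the four blocks gives the asserted normal form, with $T_\infty$ absorbing every $\partial_\theta$- and $O(r^2)$-term.
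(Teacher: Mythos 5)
Your proposal is correct and follows essentially the route the paper intends: the proposition is stated as a direct consequence of the inverse-metric expansion \eqref{eq-GAB}, and your substitution of $\partial_n=\frac{x^2}{z_n}\partial_x-\frac{\sqrt{-1}}{2z_n}\partial_\theta$ into the four index-blocks, with cancellation of the singular factors $1/\bar z_n$, $1/r^2$ against the metric factors $\bar z_n$, $r^2$ before estimating, and with Lemma \ref{lem:theta} absorbing all $\partial_\theta$-terms into $T_\infty$, is precisely the computation the paper leaves implicit. Your block-by-block bookkeeping (in particular the origin of $x\partial_x$ from $\partial_n(x^2)\cdot\frac{1}{\bar z_n}\partial_x$ and the exact cancellation producing $x\partial_x+\frac{1}{2}x^2\partial_x^2$ from $g^{\bar nn}$) checks out, and the phase $\cos\theta-\sqrt{-1}\sin\theta$ in $C^*_{\alpha\bar n}$ is pure bookkeeping of $\bar z_n=re^{-\sqrt{-1}\theta}$, consistent with what you obtain.
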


What are most relevant to us is another set of non-holomorphic coordinates. 
 By tubular neighborhood theorem, we can find a neighborhood $U_\delta$ of $D$ diffeomorphic to the normal bundle of $D$. Even though the normal bundle can be made to a complex manifold, this diffeomorphism is in general not holomorphic. The coordinates we use are bundle coordinates: Locally 
$z^*_\alpha$ ($\alpha=1,\dots n-1$) are coordinates of $D$, and $\xi,\eta$ are fiber coordinates. We also need polar coordinates with respect to $(\xi,\eta)$, namely $\xi=x^* \cos\theta^*, \eta= x^* \sin\theta^*$. Again, we set $x^*=1/(-\log (x^*)^2).$  Note that $x^*$ is globally defined on $U_\delta$.

We have the following relations between $(z_1,\dots, z_n)$ and $(z^*_1,\dots, z^*_{n-1}, \xi,\eta)$:
\begin{enumerate}
\item $z^*_\alpha |_D=z_\alpha |_D$ for $\alpha=1,\dots, n-1$.
\item $z_n=0$ if and only if $\xi=\eta=0$.
\item $x^*$ equals the distance to $D$ with respect to some fixed good Riemannian metric, and hence is a globally defined function on $U_\delta$.
\end{enumerate}

We shall need the following three technical lemmas, whose proofs are contained in the Appendix.

\begin{lemma}\label{lem-theta*}
All the derivatives of $u$ with respect to $\theta^*$ are of order $O(x^\infty)$.
\end{lemma}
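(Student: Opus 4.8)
The plan is to express the angular field $\partial_{\theta^*}$ of the tubular (normal-bundle) coordinates in terms of the holomorphic angular field $\partial_\theta$ of $z_n=re^{\sqrt{-1}\theta}$, and to show that the correction is negligible at the scale of $\mathbf{O}(x^\infty)$. From the coordinate analysis carried out in the Appendix, writing $\rho:=(\xi^2+\eta^2)^{1/2}$ for the fiber radius, the bundle coordinate $\zeta:=\xi+\sqrt{-1}\eta=\rho\,e^{\sqrt{-1}\theta^*}$ (so $x^*=(-\log\rho^2)^{-1}$ and $\rho$ is comparable to $r$) is related to $z_n$ by a relation whose leading part along $D$ is complex linear, i.e.\ $z_n=A(z')\zeta+O(|\zeta|^2)$ with $A$ smooth and nonvanishing, while the base coordinates satisfy $z_\beta=z_\beta^*+O(|\zeta|)$ for $\beta=1,\dots,n-1$. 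It is precisely the complex linearity of this leading term — equivalently, the absence of a $\bar\zeta$-term at first order, which holds because the tubular diffeomorphism is chosen compatibly with the complex structure to first order along $D$ — that makes the argument work. Differentiating with $\rho$ and $z^*$ held fixed (so that $\partial_{\theta^*}=\sqrt{-1}\zeta\partial_\zeta-\sqrt{-1}\bar\zeta\partial_{\bar\zeta}$) gives $\partial_{\theta^*}z_n=\sqrt{-1}z_n+O(r^2)$, $\partial_{\theta^*}\bar z_n=-\sqrt{-1}\bar z_n+O(r^2)$ and $\partial_{\theta^*}z_\beta=O(r)$, so by the chain rule
\[
\partial_{\theta^*}=\partial_\theta+E,\qquad \partial_\theta=\sqrt{-1}\big(z_n\partial_{z_n}-\bar z_n\partial_{\bar z_n}\big),
\]
where $E$ is a first order operator whose coefficients against $\partial_{z_\beta},\partial_{\bar z_\beta}$ are $O(r)$ and whose coefficients against $\partial_{z_n},\partial_{\bar z_n}$ are $O(r^2)$.

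Two elementary facts then finish the job. First, since $r=e^{-1/(2x)}$, every positive power $r^m$ decays faster than any power of $x$, and this persists after applying $\partial_x$ (because $\partial_x r^m=\tfrac{m}{2}x^{-2}r^m$) or $\partial_\theta,\partial_{z'}$; hence $r^m$ and all its derivatives are $\mathbf{O}(x^\infty)$, and $x$ is comparable to $x^*$. Second, by Theorem \ref{existence} and Lemma \ref{lem-v_higher} the tangential derivatives $\partial_{z_\beta}u,\partial_{\bar z_\beta}u$ are bounded, while the normal derivatives $\partial_{z_n}u,\partial_{\bar z_n}u$ blow up at most like $x/r$ (using $g^{\bar n n}\sim r^2/x^2$ from \eqref{eq-GAB}). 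Consequently the $O(r)$-coefficient terms of $E$ applied to $u$ are $O(r)\cdot\text{bounded}=\mathbf{O}(x^\infty)$, and the $O(r^2)$-coefficient terms are $O(r^2)\cdot O(x/r)=O(rx)=\mathbf{O}(x^\infty)$; thus $Eu=\mathbf{O}(x^\infty)$. Combining with $\partial_\theta u=\mathbf{O}(x^\infty)$ from Lemma \ref{lem:theta}, we obtain $\partial_{\theta^*}u=\mathbf{O}(x^\infty)$.

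For the higher derivatives $\partial_{\theta^*}^k u$ I would argue by induction, writing $\partial_{\theta^*}^k=(\partial_\theta+E)^k$ and expanding. Each occurrence of $\partial_\theta$ either lands on $u$ (giving $\mathbf{O}(x^\infty)$ by Lemma \ref{lem:theta}) or differentiates a coefficient of $E$ (which preserves its $O(r^m)$ order by the first fact above), and each occurrence of $E$ contributes a factor $O(r^m)$ with $m\geq 1$; since all coefficients are smooth with uniformly bounded quasi-coordinate derivatives, every resulting term is a bounded-order derivative of $u$ multiplied by a function that is $\mathbf{O}(x^\infty)$, hence is itself $\mathbf{O}(x^\infty)$ (the defining condition, which controls all $\partial_{z',\bar z'}^k\partial_x^l$, is stable under this process). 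The main obstacle is not the analysis but the geometry feeding into it: one must verify from the Appendix computation that the tubular-to-holomorphic change of coordinates is complex linear to leading order along $D$, for otherwise a $\bar\zeta$-term at first order would produce an $O(1)\cdot x^2\partial_x u$ contribution inside $E$, and $x^2\partial_x u$ need not be $\mathbf{O}(x^\infty)$. Establishing this compatibility, together with the careful accounting of the $O(r)$ versus $O(r^2)$ orders of the various Jacobian entries, is where the real work lies.
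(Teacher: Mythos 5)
Your proof is correct, and it is essentially the paper's own argument run through the chain rule in the opposite direction: the paper expands $\partial_\theta$ in the starred frame, notes that the coefficients of $\partial_{z^*_\alpha},\partial_{\bar z^*_\alpha},\partial_{x^*}$ are $O(r)$ while $\partial\theta^*/\partial\theta$ is $O(1)$ and nonvanishing, and then solves for $\partial_{\theta^*}u$ using Lemma \ref{lem:theta} and induction; you instead expand $\partial_{\theta^*}$ in the holomorphic frame and apply it to $u$ directly. The inputs are the same, and the only substantive difference is which auxiliary estimates are needed: your direction must track the blow-up rate of $\partial_{z_n}u$ (hence the $O(r^2)$ accuracy of the normal coefficients), while the paper's direction pairs $O(r)$ coefficients with starred derivatives of $u$ that are at worst $O(1/x^*)$. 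One point where your write-up is actually sharper than the paper's: the first-order complex linearity of the tubular map, which you flag as ``where the real work lies'', is exactly the fact the paper uses silently when it asserts $\partial x^*/\partial\theta=O(r)$ --- without it one only gets $\partial_\theta r^*=O(r)$ rather than $O(r^2)$, hence $\partial x^*/\partial\theta=O((x^*)^2)$, and the term $\frac{\partial x^*}{\partial\theta}\frac{\partial u}{\partial x^*}$ would no longer be $\mathbf{O}(x^\infty)$. This linearity does hold here, because the tubular identification can be normalized so that its differential along $D$ is the complex-linear identity on the normal bundle (equivalently, the ``good'' Riemannian metric defining $r^*$ is taken Hermitian, making the first-order term of $r^*$ independent of $\theta$), so your concern is legitimate rather than a gap, and both proofs rest on the same implicit geometric normalization.
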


\begin{lemma}\label{lem-linear*}
 We also have
\begin{align}
 \label{eq-TrV-XStar} g^{\bar ji}\partial_i\partial_{\bar j}={1\over 2}(x^*)^2\frac{\partial^2}{(\partial x^*)^2}+x^*\frac{\partial}{\partial x^*}+(x^*)^2 Re( C_{\alpha \bar{n}}^*\frac{\partial^2 }{\partial x \partial z^*_\alpha})+\Delta_D +O(x^*),
\end{align}
when acting on $u$ or on $\psi$ which is independent of $\theta^*$.
\end{lemma}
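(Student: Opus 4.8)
The plan is to transport the local formula of Proposition~\ref{lem:Lap_cx}, which expresses $g^{\bar ji}\partial_i\partial_{\bar j}$ in the holomorphic variables $(z',x,\theta)$, to the tubular variables $(z^{*\prime},x^*,\theta^*)$ by a careful change of variables. Since the operator is intrinsic, it suffices to apply the chain rule to a function $\psi=\psi(x^*,z^{*\prime},\bar z^{*\prime})$ (for $u$ this is legitimate up to $O(x^\infty)$ by Lemma~\ref{lem-theta*}) and to identify the coefficients of $\psi_{x^*x^*}$, $\psi_{x^*}$, $\psi_{x^*z^*_\alpha}$ and $\psi_{z^*_\alpha\bar z^*_\beta}$. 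Concretely I would compute the four ``symbol'' quantities
$$P:=g^{\bar ji}\partial_i x^*\,\partial_{\bar j}x^*,\quad Q:=g^{\bar ji}\partial_i\partial_{\bar j}x^*,\quad R_\alpha:=g^{\bar ji}\partial_i x^*\,\partial_{\bar j}z^*_\alpha,\quad S^{\bar\beta\alpha}:=g^{\bar ji}\partial_i z^*_\alpha\,\partial_{\bar j}\bar z^*_\beta,$$
and show that $P=\tfrac12(x^*)^2(1+O(x^*))$, $Q=x^*(1+O(x^*))$, that the $R_\alpha$-term reproduces $(x^*)^2\,\mathrm{Re}(C^*_{\alpha\bar n}\partial_x\partial_{z^*_\alpha})$, and that $S^{\bar\beta\alpha}=\eta^{\bar\beta\alpha}+O(x^*)$, whence the asserted form follows once all remaining pieces are shown to be $O(x^*)$ or $O(x^\infty)$ in the sense of \eqref{eq-TForm}.

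Two comparisons feed these computations. For the base variables, because $z^*_\alpha|_D=z_\alpha|_D$ and the tubular map is the identity on $D$, the difference $z^*_\alpha-z_\alpha$ vanishes on $D=\{r=0\}$ and is therefore $O(r)=O(e^{-1/2x})=O(x^\infty)$, together with all of its $(z^{*\prime},x^*,\theta^*)$-derivatives. Since $z_\alpha$ is holomorphic, $g^{\bar ji}\partial_i\partial_{\bar j}z^*_\alpha=g^{\bar ji}\partial_i\partial_{\bar j}(z^*_\alpha-z_\alpha)=O(x^\infty)$, so the would-be lone first-order terms $\psi_{z^*_\alpha},\psi_{\bar z^*_\beta}$ are harmless, and $S^{\bar\beta\alpha}$ reduces to the $(n-1)\times(n-1)$ block of \eqref{eq-GAB}, i.e.\ $\eta^{\bar\beta\alpha}+O(x^*)$, giving $\Delta_D$. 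For the radial variable I would use that $x^*$ is built from the geodesic distance to $D$: a radial segment $z_n=te^{\sqrt{-1}\theta}$, $0\le t\le r$, has length element $\sqrt{2g_{n\bar n}(z',te^{\sqrt{-1}\theta})}$ independent of $\theta$, so $\mathrm{dist}(\cdot,D)=c(z')r(1+O(r))$ with $c$ smooth and $\theta$-independent. Taking $-\log$ of the square yields
$$\tfrac{1}{x^*}=\tfrac1x+p(z')+O(x^\infty),\qquad p\ \text{smooth and $\theta$-independent},$$
so $x^*=x(1+O(x))$ with $\partial_\theta(x^*/x)=O(x^\infty)$, and $x^*\partial_{x^*}$, $(x^*)^2\partial^2_{x^*}$ agree with $x\partial_x$, $x^2\partial^2_x$ up to admissible $(1+O(x))$-coefficient errors.

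The main obstacle is precisely this radial comparison in the presence of the non-holomorphic tubular map. A priori $x^*$ depends on $\theta$, and one checks that a genuine $O(1)$ angular dependence of $x^*/x$ would corrupt the leading coefficient of $P$, through the large normal derivative $\partial_{z_n}x^*\sim x^2/z_n$ weighted against $g^{\bar nn}\sim r^2/2x^2$ from \eqref{eq-GAB}. The point that saves the argument is that $x^*$ is defined via the geodesic distance rather than via a bundle radial coordinate: the rotation invariance of the Hermitian length in the normal $2$-plane forces the leading coefficient $c(z')$ to be $\theta$-independent, so the angular dependence of $x^*/x$ is pushed down to the exponentially small $O(r)$ correction. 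Combined with Lemmas~\ref{lem:theta} and \ref{lem-theta*}, which guarantee that every derivative falling on $\theta$ or $\theta^*$ produces $O(x^\infty)$ when acting on $u$ or on a $\theta^*$-independent $\psi$, this lets all the genuinely non-holomorphic mixing (of $z_n\leftrightarrow\bar z_n$ and of normal $\leftrightarrow$ tangential directions) be absorbed into the $O(x^*)$ and $O(x^\infty)$ parts of the operator. The rest is the bookkeeping of substituting these comparisons into the chain-rule expansion and matching with the form \eqref{eq-TForm}, which I would carry out in the Appendix.
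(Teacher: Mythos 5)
Your proposal is correct and follows essentially the same route as the paper's Appendix proof: both arguments are a chain-rule transfer between the holomorphic coordinates $(z',x,\theta)$ and the tubular coordinates $(z^{*\prime},x^*,\theta^*)$, resting on the same ingredients --- the inverse-metric asymptotics \eqref{eq-GAB}, the vanishing $z^*_\alpha-z_\alpha=O(r)=O(x^\infty)$ on $D$, the radial comparison $r^*=A(z')\,r\,(1+O(r))$ (the paper's resulting $\partial x^*/\partial x=1+4\log A\,x+O(x^2)$ is equivalent to your $1/x^*=1/x+p(z')+O(x^\infty)$), and Lemmas \ref{lem:theta} and \ref{lem-theta*} to discard every $\theta$- or $\theta^*$-derivative acting on $u$ or on $\theta^*$-independent $\psi$. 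The only differences are organizational (you compute the transported coefficients $P$, $Q$, $R_\alpha$, $S^{\bar\beta\alpha}$ directly against $g^{\bar ji}$, while the paper converts $\partial_x$, $\partial_{z_\alpha}\partial_{\bar z_\beta}$ and $z_n\partial_{z_n}\partial_{\bar z_\beta}$ term by term into starred derivatives), plus one genuine addition on your side: you justify, via the $J$-invariance of the reference metric on the normal $2$-plane (correctly, though only the value of $g_{n\bar n}$ at $t=0$, not the full length element along the segment, is $\theta$-independent), why the leading coefficient $A(z')$ in $r^*=Ar+O(r^2)$ does not depend on $\theta$ --- a point the paper implicitly assumes when it calls $A$ ``a locally defined function on $D$''.
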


\begin{lemma}\label{lem-deriv} 
For any $i\in I$ and integer $j\geq 0$, and smooth function $c_{i,j}$ on $D$, we have
$$|\nabla\bar\nabla (c_{i,j}(x^*)^i(\log x^*)^j)|_\omega=O((x^*)^i(-\log x^*)^j).$$
\end{lemma}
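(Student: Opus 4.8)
The plan is to reduce everything to Cheng-Yau's quasi-coordinates, where the estimate becomes elementary. Since $\nabla\bar\nabla\psi$ is the complex Hessian $(\partial_a\partial_{\bar b}\psi)$ of the function $\psi:=c_{i,j}(x^*)^i(\log x^*)^j$, it carries no connection terms (the mixed Christoffel symbols of a K\"ahler metric vanish), so in a quasi-coordinate chart $(z',w)$ --- where $(g_{a\bar b})$ and its inverse are uniformly equivalent to the identity with all derivatives bounded --- one has the pointwise comparison
$$|\nabla\bar\nabla\psi|_\omega\le C\Big(\sum_{a,b}|\partial_a\partial_{\bar b}\tilde\psi|^2\Big)^{1/2},$$
where $\tilde\psi$ is the pullback of $\psi$. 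It therefore suffices to bound the Euclidean mixed second derivatives of $\tilde\psi$ by $C(x^*)^i(-\log x^*)^j$.

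The key preliminary step is to understand $x^*$ in a quasi-coordinate chart. Writing $x=(-\log|z_n|^2)^{-1}$ and using $z_n=\exp\big(\frac{1+\eta}{1-\eta}\cdot\frac{w+1}{w-1}\big)$, a direct computation gives $x=\epsilon\,g(w,\bar w)$, where $\epsilon=\frac{1-\eta}{2(1+\eta)}$ is constant on the chart and $g=|1-w|^2/(1-|w|^2)$ is a fixed smooth function, bounded above and below on $|w|\le\frac{2}{3}$ with uniformly bounded derivatives of all orders. Since $x^*$ is comparable to $x$ and differs from it only by the smooth distance-to-$D$ versus $|z_n|$ factor, $x^*=x\cdot(1+O(x))$ with the correction smooth and controlled; consequently $L:=\log x^*$ satisfies the crucial bound that every quasi-coordinate derivative of order $\ge 1$ is uniformly bounded, $|\partial^{(k)}L|\le C_k$, even though $L$ itself is large and negative near $D$.

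Granting this, I would write $\phi:=(x^*)^i(\log x^*)^j=e^{iL}L^j$ and differentiate. Each application of $\partial_a$ or $\partial_{\bar b}$ produces the factor $e^{iL}=(x^*)^i$ times a polynomial in $L$ of degree at most $j$ whose coefficients are products of the bounded quantities $\partial^{(m)}L$; since $|L|^j$ dominates the lower powers, this yields $|\partial_a\partial_{\bar b}\phi|\le C(x^*)^i|L|^j=C(x^*)^i(-\log x^*)^j$, together with the same bound for $\phi$ and its first derivatives. Finally, expanding $\partial_a\partial_{\bar b}(c_{i,j}\phi)$ by the product rule, the factor $c_{i,j}$ --- smooth on $D$ and extended through the Tubular Neighborhood Theorem, hence with uniformly bounded quasi-coordinate derivatives --- contributes only bounded factors, so every term is again $O((x^*)^i(-\log x^*)^j)$. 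Combined with the first step this proves the lemma.

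The main obstacle is the preliminary fact about $x^*$: because the tubular-neighborhood identification is not holomorphic, $x^*$ is not a function of $|z_n|$ alone, and one must check that its discrepancy with $x$ is smooth and small enough that $\log x^*$ retains uniformly bounded quasi-coordinate derivatives. This is precisely the non-holomorphic-coordinate difficulty flagged in the introduction; once the comparison $x^*=x\,(1+O(x))$ with controlled derivatives is in hand (as in the companion lemmas and the Appendix), the rest is routine. An alternative, more computational route avoids quasi-coordinates and works directly from the metric asymptotics \eqref{eq-GAB}: there the normal-normal Hessian component $\psi_{n\bar n}$ is of size $(x^*)^{i+2}(-\log x^*)^j/r^2$, which blows up, but is paired with $(g^{\bar nn})^2\sim r^4/(x^*)^4$, the two powers of $r$ cancelling to leave exactly $O((x^*)^{2i}(-\log x^*)^{2j})$ for the squared norm; tracking this cancellation across all index combinations is the essential subtlety of that approach.
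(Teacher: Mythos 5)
Your proof is correct, but it follows a genuinely different route from the paper's. The paper's own proof (in the Appendix) is exactly the ``alternative, more computational route'' you sketch in your closing paragraph: it works in the local holomorphic coordinates $(z_1,\dots,z_n)$, estimates each block of the coordinate Hessian of $c_{i,j}(x^*)^i(\log x^*)^j$ separately --- tangential-tangential of size $O((x^*)^i(\log x^*)^j)$, normal-tangential of size $O((r^*)^{-1}(x^*)^{i+1}(\log x^*)^j)$, normal-normal blowing up --- and then contracts against the inverse-metric asymptotics \eqref{eq-GAB}, so that the degenerate factors $g^{\bar nn}\sim \frac{r^2}{2x^2}$ and $g^{\bar\beta n}=O(r)$ cancel the blow-up; it can do this in three lines because the needed derivative bounds $\partial x^*/\partial z_\alpha=O((x^*)^2)$ and $\partial x^*/\partial z_n=O((x^*)^2/r^*)$ were already established in the proof of Lemma \ref{lem-linear*}. (Incidentally, your raw size $(x^*)^{i+2}(-\log x^*)^j/r^2$ for the normal-normal component is the accurate one; the paper's displayed $O((x^*)^i(\log x^*)^j)$ for $\partial^2_{z_n\bar z_n}$ is only correct after the pairing with $g^{\bar nn}$, i.e.\ it should be read as the weighted component.) Your main argument instead pulls everything back to Cheng--Yau quasi-coordinates, where bounded geometry makes the metric uniformly Euclidean-equivalent and the mixed Christoffel symbols vanish, and reduces the lemma to one structural fact: $L=\log x^*$ has uniformly bounded quasi-coordinate derivatives of all orders $\geq 1$. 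Your verification of that fact is sound --- the exact identity $x=\frac{1-\eta}{2(1+\eta)}\cdot\frac{|1-w|^2}{1-|w|^2}$ shows $\log x$ differs from the constant $\log\frac{1-\eta}{2(1+\eta)}$ by a fixed smooth function of $w$, and the comparison $x^*=x(1+O(x))$ with controlled corrections follows from the same Appendix input $r^*=Ar+O(r^2)$ that the paper uses, since quasi-coordinate derivatives of $x$ are $O(x)$ and derivatives of $\log A^2(z')$ are bounded. What your route buys: no componentwise cancellation bookkeeping, and the identical chain-rule computation automatically controls all higher derivatives (hence bounds of the type $|(x\partial_x)^l\partial^k_{z',\bar z'}(\cdot)|$ and $|\nabla^k(\cdot)|_\omega$), not just the complex Hessian --- which is in the spirit of how the paper treats $f$ and $u$ in Lemmas \ref{lem-v_higher} and \ref{lem:theta}. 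What the paper's route buys: it stays in the same holomorphic charts where \eqref{eq-GAB}, Proposition \ref{lem:Lap_cx} and Lemma \ref{lem-linear*} are computed, so the lemma comes essentially for free from work already done, at the cost of having to see the $r$-cancellation explicitly.
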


In the rest of this paper, locally we always use coordinate charts like
$z_\alpha^*, x^*, \theta^*$. By for simplicity, we still denote them as  $z_\alpha, x, \theta.$

{\bf To simplify notations, in the following sections of the paper, we simply write $(x^*, \theta^*)$ as $(x,\theta)$. Hopefully this will cause no confusion.}

Given a function $\psi$ in $U_\delta$, we can average the $\theta$ direction by integration\footnote{This kind of construction has already appeared in \cite{RoZh} }. To be precise, for any fixed point $p\in D$ with coordinate $z'$, and fixed $x$, we derive a function
\begin{align*}
\tilde{\psi}( z', x) = \frac{1}{2\pi} \int_0^{2\pi}  \psi(z^\prime, x, \theta) d\theta   ,
\end{align*}
under the local coordinate chart. $\tilde{\psi}$ is globally defined, and according to Lemma \ref{lem:theta},
\begin{align*}
R(z^\prime, x, \theta):=\psi(z^\prime, x, \theta)-\tilde{\psi}(z^\prime, x) = O(x^\infty).
\end{align*}
For example, we have by Lemma \ref{lem-f} that 
\begin{align}\label{eq-f-tilde}
f=\sum_{i=0}^\infty \tilde f_i x^i+O(x^\infty).
\end{align}
{\bf In the following sections, a $``\ \tilde{}\ "$ over a function always means its average on the $\theta$-direction.}

\section{Constructing a formal expansion}\label{formal}

Lemma \ref{lem:theta} suggests that we should find approximate solutions of the form
\begin{align*}
\psi_k= \sum_{i\in I, i\leq k}\sum_{j=0}^{N_i} c_{i,j} x^i (\log x)^j
,\end{align*}
where $I$ is an index set defined below, and
$c_{i,j}$ are functions on $D$, such that,
\begin{align*}
Q(\psi_k):=\log\frac{(\omega +\ddbar \psi_k)^n}{ \omega^n}-\psi_k-f=O(x^{k_+}(\log x)^{N_{k_+}}).
\end{align*}
Here integers $N_{i}$ can be defined inductively and explicitly if $I$ is known. $k_+$ is the next larger element of $k$ in $I$. Note that by Lemma \ref{lem-deriv}, $\omega +\ddbar \psi_k$ is uniformly equivalent to $\omega$ when $x$ is small.

The index set $I$ is defined as follows: First we assume $\{\lambda_k\}$'s are increasing eigenvalues of $-\Delta_D=\eta^{\bar{\beta}\alpha}\partial_\alpha\partial_{\bar\beta}$ on $D$.
Denote the two zeros of $\frac{1}{2}k^2+\frac{1}{2}k-1-\lambda_k$ by $\overline{m}_k, \underline{m}_k$,  where
$
\overline{m}_k\geq 1,
\underline{m}_k \leq -2,
$
and
\begin{align}\label{eq-mk}
\overline{m}_k \sim \sqrt{2\lambda_k}, \quad
\underline{m}_k \sim -\sqrt{2\lambda_k}
\end{align}
as $k \rightarrow \infty$.
Then we define the index set $I$ as the monoid generated by $\{1\}\cup\{\overline{m}_k\}_{k=1}^\infty$, and align its elements in the ascending order.
We denote $E_\lambda$ the eigenfunction space of $-\Delta_D$ with respect to the eigenvalue $\lambda$, and $E_\lambda^\perp$ its perpendicular space.

We need to approximate the operator $Q(\psi)$ by its linearization and estimate its error. The following calculation is well-known:
\begin{lemma}\label{lem-taylor} For any smooth function $\psi$ defined near $D$ such that $\omega+\ddbar\psi>0$ and is equivalent to $\omega$, then we have
$$|\log\frac{(\omega+\ddbar\psi)^n}{\omega^n}-g^{\bar j i}\psi_{i\bar j}|\leq C|\nabla\bar\nabla\psi|_\omega^2.$$
\end{lemma}

\begin{proof}
Write $g_t$ for the metric associated to $\omega+t\ddbar\psi$, then we have
\beqs
\log\frac{(\omega+\ddbar\psi)^n}{\omega^n}&=&\int_0^1\frac{\partial}{\partial t}\log\det(g_{i\bar j}+t\psi_{i\bar j}) dt= \Big(\int_0^1 g_t^{\bar j i} dt\Big) \psi_{i\bar j}\\
&=& g^{\bar j i}\psi_{i\bar j}+\Big(\int_0^1 g_t^{\bar j i}-g^{\bar j i} dt\Big) \psi_{i\bar j}\\
&=&g^{\bar j i}\psi_{i\bar j}+\Big(\int_0^1\int_0^1\frac{\partial}{\partial s} g_{st}^{\bar j i}ds\ dt\Big) \psi_{i\bar j}\\
&=& g^{\bar j i}\psi_{i\bar j}-\Big(\int_0^1t \big(\int_0^1 g_{st}^{\bar j p}g_{st}^{\bar qi }ds\big)dt\Big) \psi_{i\bar j}\psi_{p\bar q}.
\eeqs
By our assumption, the metrics $g_{st}$ are uniformly equivalent to $g$, so we get the conclusion from the above identity.
\qed
\end{proof}

As the 0th order approximation, we choose $\psi_0=c_0$, then by Lemma \ref{lem-f}, we have 
$$Q(\psi_0)=O(x).$$
To find higher order approximations, we define:
$$ N:=\frac{1}{2}x^2\frac{\partial^2}{\partial x^2}+x\frac{\partial}{\partial x}-1.$$
If $\psi_1:=c_0+c_{1,0} x$, then
\beqs
Q(\psi_1)&=&g^{\bar ji}\partial_i\partial_{\bar j}\psi_1+O(|\nabla\bar\nabla\psi_1|^2_\omega)-\psi_1-f\\
&=& \Delta_D\psi_1+N\psi_1-f+O(x^2)\\
&=& (\Delta_D c_{1,0}-\tilde f_1)x+O(x^2),
\eeqs
where $\tilde{f}_1$ is defined in \eqref{eq-f-tilde}.
However, $\Delta_D c_{1,0}=\tilde f_1$ is solvable if and only if $\tilde f_1$ is orthogonal to the eigenfunctions associate to the 0 eigenvalue of $\Delta_D$, i.e. $\int_D \tilde f_1 dv_D=0$.
If this is not true, we shall need a log-term correction: Set instead $\psi_1:=c_0+c_{1,0} x+c_{1,1}x\log x$, then 
\beqs
Q(\psi_1)&=&g^{\bar ji}\partial_i\partial_{\bar j}\psi_1+O(|\nabla\bar\nabla\psi_1|^2_\omega)-\psi_1-f\\
&=& \Delta_D\psi_1+N\psi_1-f+O(x^2(\log x)^2)\\
&=& (\Delta_D c_{1,0}+\frac{3}{2}c_{1,1}-\tilde f_1)x+(\Delta_D c_{1,1}) x\log x+O(x^2(\log x)^2).
\eeqs
If we require $Q(\psi_1)=O(x^2(\log x)^2)$, then we have
$$\Delta_D c_{1,0}+\frac{3}{2}c_{1,1}-\tilde f_1=0,\quad \Delta_D c_{1,1}=0.$$
So $c_{1,1}$ must be a constant such that $\int_D (\frac{3}{2}c_{1,1}-\tilde f_1)dv_D=0$. Then $c_{1,0}$ is solvable and unique up to a constant. So $c_{1,0}$ can not be determined locally, hence we call it ``the first global term".

\begin{lemma}\label{lem-c11}
The coefficient $c_{1,1}$ is a topological number, depending only on $D$ and its normal bundle.
\end{lemma}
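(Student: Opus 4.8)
The plan is to make the solvability condition for $c_{1,1}$ fully explicit and then recognize the resulting integral as a characteristic number of $D$. Since $\Delta_D c_{1,1}=0$ on the closed connected manifold $D$ forces $c_{1,1}$ to be constant, the condition $\int_D(\tfrac32 c_{1,1}-\tilde f_1)\,dv_D=0$ gives
$$c_{1,1}=\frac{2}{3\,\mathrm{Vol}(D)}\int_D \tilde f_1\,dv_D,\qquad \mathrm{Vol}(D)=\int_D\frac{\omega_D^{\,n-1}}{(n-1)!}.$$
Thus it suffices to show that both $\mathrm{Vol}(D)$ and $\int_D\tilde f_1\,dv_D$ depend only on cohomological data of $D$ and its normal bundle $N_D=\cO(D)|_D$. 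The volume is immediate: $Ric(\omega_D)=-\omega_D$ gives $[\omega_D]=2\pi c_1(K_D)$, so $\mathrm{Vol}(D)$ is determined by $[\omega_D]^{\,n-1}$ and hence by the intrinsic class $c_1(K_D)$.

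The heart of the matter is to identify $\tilde f_1$. Using $f=\log(\Phi/\Psi)$ with $\Psi=\|s\|^2\sigma^2\omega^n$, I would write $f=\log\phi+\sigma-2\log\sigma-\log(n!\det g)$ and expand $\det g=\det\tilde\vartheta\,(1+\tilde\vartheta^{\bar ji}N_iN_{\bar j})$ by the matrix determinant lemma. The crucial simplification is that, since $x=(-\log r^2)^{-1}$, one has $r=e^{-1/(2x)}=O(x^\infty)$; therefore every smooth coefficient ($\phi$, $\varphi$, $\vartheta_{i\bar j}$) may be frozen at its value on $D$ modulo $O(x^\infty)$, and the only genuine polynomial dependence on $x$ enters through the explicit $x$ in $\tilde\vartheta_{i\bar j}=\vartheta_{i\bar j}-\tfrac{2x\varphi_{i\bar j}}{1+x\varphi}$. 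After this reduction the dominant term of $1+\tilde\vartheta^{\bar ji}N_iN_{\bar j}$ is $2\tilde\vartheta^{\bar nn}|\sigma_n|^2/\sigma^2$, and the remaining $\log\sigma$ and $1/x$ divergences cancel exactly against $\sigma-2\log\sigma$, leaving
$$f=\log\phi+\varphi-\log\big(2\,n!\,\det\tilde\vartheta\cdot\tilde\vartheta^{\bar nn}\big)+O(x^\infty).$$
Expanding to first order in $x$ via $\tilde\vartheta_{i\bar j}=\vartheta_{i\bar j}-2x\varphi_{i\bar j}+O(x^2)$ and simplifying the combination $\vartheta^{\bar ji}\varphi_{i\bar j}-\vartheta^{\bar ni}\varphi_{i\bar j}\vartheta^{\bar jn}/\vartheta^{\bar nn}$ by the Schur complement identity (which is precisely how $\eta^{\bar\beta\alpha}$ appears as the inverse of $(\vartheta_{\alpha\bar\beta}|_D)$), I expect to obtain
$$\tilde f_1=2\,\eta^{\bar\beta\alpha}\varphi_{\alpha\bar\beta}\big|_D=2\,\mathrm{tr}_{\omega_D}\!\big(\sqrt{-1}\,\partial_D\bar\partial_D\varphi|_D\big),$$
i.e. twice the $\omega_D$-trace of the curvature of $N_D$.

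Finally I would convert the integral into a characteristic number. Since $\tfrac{\sqrt{-1}}{2\pi}\partial_D\bar\partial_D\varphi|_D$ represents $c_1(N_D)$ and $\mathrm{tr}_{\omega_D}(\beta)\,\tfrac{\omega_D^{\,n-1}}{(n-1)!}=\tfrac{\beta\wedge\omega_D^{\,n-2}}{(n-2)!}$ for any real $(1,1)$-form $\beta$, we get
$$\int_D\tilde f_1\,dv_D=\frac{4\pi}{(n-2)!}\int_D c_1(N_D)\wedge\omega_D^{\,n-2}=\text{const}\cdot\big\langle c_1(N_D)\cup c_1(K_D)^{\,n-2},[D]\big\rangle,$$
using $[\omega_D]=2\pi c_1(K_D)$ to absorb the numerical factor into the constant. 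Combined with the topological nature of $\mathrm{Vol}(D)$, this shows $c_{1,1}$ depends only on $D$ (through $c_1(K_D)$) and on its normal bundle $N_D$. The main obstacle is the determinant bookkeeping of the middle paragraph: one must verify carefully that all the large and divergent pieces ($1/r^2$, $1/x$, $\log\sigma$) are either $O(x^\infty)$ or cancel, so that the first-order coefficient collapses cleanly to the tangential trace via the Schur complement; the geometric identification and the topological conclusion are then routine.
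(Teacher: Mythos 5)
Your proposal is correct and takes essentially the same route as the paper's own proof: both express $c_{1,1}$ as $\frac{2}{3\,\mathrm{Vol}(D)}\int_D\tilde f_1\,dv_D$, compute $\tilde f_1|_D=2\eta^{\bar\beta\alpha}\varphi_{\alpha\bar\beta}|_D$ via the determinant identity $\det(g_{i\bar j})=\det(\tilde\vartheta_{i\bar j})(1+\tilde\vartheta^{\bar ji}N_iN_{\bar j})$ together with the Schur complement, and then identify the result with $K_D^{n-2}\cdot N_D/K_D^{n-1}$ up to a constant depending only on $n$, using $[\omega_D]=2\pi c_1(K_D)$ and $N_D=\cO(D)|_D$. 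The only difference is that you spell out the cancellation of the divergent terms ($1/x$, $\log\sigma$) and the freezing of smooth coefficients modulo $O(x^\infty)$, which the paper compresses into ``a direct computation shows,'' and your bookkeeping there is accurate.
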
 

\begin{proof} By the previous discussion, we have
$$c_{1,1}=\frac{2}{3}\fint_D \tilde f_1|_D dv_D.$$
Recall that
$$f=\log\frac{\Phi}{\|s\|^2(\log\|s\|^2)^2\omega^n}.$$
In local holomorphic coordinates, we have
 $$\det(g_{i\bar j})=\det(\tilde \vartheta_{i\bar j}+N_iN_{\bar j})=\det(\tilde\vartheta_{i\bar j})(1+\tilde\vartheta^{\bar j i}N_iN_{\bar j}).$$
 A direct computation shows that
\beqs
\tilde f_1|_D&=& (\partial_{x^*} f)|_D= (\partial_x f)|_D\\
&=& \partial_x|_{x=0}\log\frac{\phi}{\|s\|^2(\log\|s\|^2)^2 \det(g_{i\bar j})}\\
&=& 2\Big(\vartheta^{\bar q p}-\frac{\vartheta^{\bar n p}\vartheta^{\bar q n}}{\vartheta^{\bar n n}}\Big)|_D\cdot \varphi_{p\bar q}|_D\\
&=& 2\Big(\vartheta^{\bar \beta \alpha}-\frac{\vartheta^{\bar n \alpha}\vartheta^{\bar \beta n}}{\vartheta^{\bar n n}}\Big)|_D \cdot \varphi_{\alpha\bar \beta}|_D\\
&=&2\eta^{\bar\beta\alpha}\varphi_{\alpha\bar\beta}|_D.
\eeqs
It is essentially the trace of $(\ddbar\varphi)|_D$ with respect to $\omega_D$, up to a constant factor. Since the restriction of the line bundle $\cO(D)$ to $D$ is just the normal bundle of $D$ in $\overline X$, denoted by $N_D$, and $[\omega_D]=2\pi c_1(K_D)$, we have $c_{1,1}$ equals 
$$\frac{K^{n-2}_D\cdot N_D}{K^{n-1}_D}$$
up to a constant factor depending only on $n$. \qed
\end{proof}

Note that the appearance of $x\log x$ term and its coefficients are already pointed out by Rochon-Zhang \cite{RoZh}.

Now we proceed to higher order approximations:  Suppose we have already find $\psi_-$ such that 
$$Q(\psi_-)=d_{i,j} x^i(\log x)^j +O(x^i(\log x)^{j-1} ),$$
where $i\in I$ and $d_{i,j}$ is a smooth function on $D$.
At present we assume $j>0$. We want to find $c_{i,j}\in C^\infty(D)$ such that for $\psi:=\psi_-+c_{i,j}x^i(\log x)^j$, we have
$$Q(\psi)=O(x^i(\log x)^{j-1}).$$
Now we have
\beqs
Q(\psi)&=& Q(\psi_-)+\log\frac{\big(\omega_{\psi_-}+\ddbar(c_{i,j}x^i(\log x)^j)\big)^n}{\omega^n_{\psi_-}}-c_{i,j}x^i(\log x)^j\\
&=& (d_{i,j}-c_{i,j})x^i(\log x)^j+g^{\bar q p}_{\psi_-}\partial_p\partial_{\bar q}(c_{i,j}x^i(\log x)^j)+O(x^i(\log x)^{j-1}),
\eeqs
where we use Lemma \ref{lem-taylor} in the second equality. Since 
$$g^{\bar q p}_{\psi_-}-g^{\bar q p}=-\Big(\int_0^1 g^{\bar q k}_{t\psi_-}g^{\bar l p}_{t\psi_-}dt \Big)\psi_{k\bar l}$$
and $|\nabla\bar\nabla\psi|_\omega =O(x\log x)$, by Lemma \ref{lem-deriv} we have
\beqs
Q(\psi)&=& (d_{i,j}-c_{i,j})x^i(\log x)^j+g^{\bar q p}\partial_p\partial_{\bar q}(c_{i,j}x^i(\log x)^j)+O(x^i(\log x)^{j-1})\\
&=& \big(d_{i,j}+(\Delta_D+\frac{1}{2}i^2+\frac{1}{2}i-1)c_{i,j}\big)x^i(\log x)^j+O(x^i(\log x)^{j-1}).
\eeqs
If $\frac{1}{2}i^2+\frac{1}{2}i-1$ is not an eigenvalue of $-\Delta_D$, or equivalently $i$ is not one of the $\overline m_k$'s, then we can find a unique $c_{i,j}$ such that $Q(\psi)=O(x^i(\log x)^{j-1})$.

If $i=\overline m_l$ for some $l\in\N$,  write $d_{i,j}=d_{i,j}^0+d_{i,j}^\perp$, the orthogonal decomposition with respect to $E_{\lambda_l}$. We need to first modify $\psi_-$:
 
\noindent{\bf Claim:} There is a smooth function $\rho\in C^\infty(D)$ such that
$$Q(\psi_-+\rho x^i(\log x)^{j+1})=d_{i,j}^\perp x^i(\log x)^j+O(x^i(\log x)^{j-1}).$$
In fact, the same computation as above gives
\beqs
Q(\psi_-+\rho x^i(\log x)^{j+1})&=& d_{i,j}x^i(\log x)^j+g^{\bar q p}\partial_p\partial_{\bar q}(\rho x^i(\log x)^{j+1})\\
& &-\rho x^i(\log x)^{j+1}+O(x^i(\log x)^{j-1})\\
&=&\big(d_{i,j}+(j+1)(i+\frac{1}{2})\rho\big)x^i(\log x)^j\\
& & +\big(\Delta_D \rho +\lambda_l\rho\big)x^i(\log x)^{j+1}+O(x^i(\log x)^{j-1}).
\eeqs
We can simply choose $\rho$ to be a constant multiple of $d_{i,j}^0$. Then we can find $c_{i,j}$ such that 
$$\psi:=\psi_-+c_{i,j}x^i(\log x)^j+\rho x^i(\log x)^{j+1}$$
satisfies
$$Q(\psi)=O(x^i(\log x)^{j-1}).$$
Note that in this case $c_{i,j}$ is unique up to an element of $E_{\lambda_l}$.

When $j=0$, initially we have $Q(\psi_-)=d_{i,0} x^i+O(x^{i_+}(\log x)^m)$, where $i_+$ is the next term of $i$ in $I$ and $m$ depends on the choice of $\psi_-$. We try to find $\psi=\psi_-+c_{i,0}x^i+\rho x^i\log x$ such that $Q(\psi)=O(x^{i_+}(\log x)^m)$. The discussion is the same as above.

\begin{remark}
From the above discussion, we see that  $c_{\overline{m}_l, 0} $'s are all independent global terms. For any $l\in \mathbb{N}$, $c_{\overline{m}_l, 0}$ is unique up to an element in $E_{\lambda_l}$. So we have infinitely many formal solutions. There exists special formal solutions such that $x^{\overline m_l}(\log x)^j$ appears in the formal solution only if $\overline m_l\in\N$. However, from our proof in the next section, other non-integer $\overline{m}_l$'s also appear in the expansion in general.
\end{remark}

\section{Proof of Theorem \ref{main}}\label{iterate}

In this section, we prove Theorem \ref{main} by induction. We shall prove that if we have an asymptotic expansion of certain order, we can obtain a higher order expansion. The main tools are the solution formula for second order ODE's with constant coefficients and the method of ``separation of variables". We shall use the fact that if a function $\psi$ on $D$ has better regularity, then the ``generalized Fourier series" of $\psi$ with respect to the eigenfunctions of $\Delta_D$ has better convergence properties. 

We write $u$ as $u= c_0+ \tilde{v}+ R$, where
\begin{align*}
R(x, z^\prime, \theta):&= u(x, z^\prime, \theta)- \frac{1}{2\pi} \int_{S^1} u(x, z^\prime, \theta_1) d\theta_1,\\
&= \frac{1}{2\pi} \int_{S^1}\left( \int_0^1\partial_\theta u(x, z^\prime, t\theta+(1-t)\theta_1)dt\right) (\theta-\theta_1)d\theta_1
\end{align*}
which is
 $\bold{O}(x^\infty)$ by Lemma \ref{lem:theta}, and
\begin{align*}
\tilde{v}(x, z^\prime)= \frac{1}{2\pi}  \int_{S^1}u(x, z^\prime, \theta) d\theta - c_0.
\end{align*} 

Define a nonlinear differential operator:
$$F(\psi):=N\psi+\Delta_D \psi-Q(\psi).$$
Then the equation \eqref{eqn:main} becomes
$$N u+\Delta_D u=F(u).$$
Taking average with respect to $\theta$ in both sides, we have
\begin{align}\label{eq-av}
N\tilde{v} + \Delta_D \tilde{v} = \tilde{F}(\tilde{v}),
\end{align}
where
\begin{align*}
\tilde{F}(\tilde{v}) &=\frac{1}{2\pi}  \int_0^{2\pi}  \left( F(c_0+\tilde{v}+R)+c_0 \right) d \theta\\
&=\frac{1}{2\pi}\int_0^{2\pi}     \left( F(c_0+\tilde{v})+c_0 \right) d  \theta+\bold{O}(x^\infty) .
\end{align*}

We can view $\tilde v$ as a function on $D\times [0,\delta)$. Even though $F$ depends on the choice of local coordinates, $\tilde F(\tilde v)$ is globally defined since the left hand side of \eqref{eq-av} is globally defined on $D\times [0,\delta)$.

\begin{definition}\label{def-exp}
We say $\tilde{v}$ has an expansion of order $O(x^k)$, for some $k \in I$, if there are smooth coefficients $c_{i, j}$ defined on $D$ such that,
\begin{align*}
\tilde{v}=\psi_k+R_k= \sum_{ i\in I, i\leq k} \sum_{j=0}^{N_i} c_{i, j}x^i (\log x)^j+R_k,
\end{align*}
where the remainder $R_k$ satisfies, under the local coordinate system, for some $\epsilon\in (0, k_+-k)$,
\begin{align*}|
x^l \partial^l_x \partial^m_{z',\bar z'} R_k|\leq C(k, l,m,\epsilon) x^{k+\epsilon},
\end{align*}
 for any integers $l, m$. Equivalently, $R_k=\bold{O}(x^{k+\epsilon})$.
\end{definition}

According to Lemma \ref{lem-u-c0} and \ref{lem-v_higher}, $\tilde{v}= 0+ R_0$, where $R_0=\tilde{v}$ such that $|R_0|_{C^l_g}\leq  C_l x(-\log x)$ for any $l$.
So we say that $\tilde{v}$ has an expansion of order $O(x^0)$.

Inductively, we assume $\tilde{v}$ has an expansion of order $k$, and the goal is to prove for case $k_+$.

We adjust $\epsilon$ smaller if necessary, such that $\overline{m}_l> k_+$ implie that $\overline{m}_l> k_++\epsilon$. Indeed, we do not have a uniform $\epsilon$ for all $l$.

\begin{lemma}
If $\tilde{v}$ has an expansion of order $O(x^k)$, say, $\tilde v=\psi_k+R_k$, then  $\psi_k$ coincides with one of the formal approximate solutions constructed in $\S$ \ref{formal},  and
$\tilde{F}(\tilde{v})$ has an expansion of order $O(x^{k_+})$, i.e.,
\begin{align}\label{eq-FTilde}
\tilde{F}(\tilde{v})= \sum_{i\in I, i\leq k_+} \sum_{j=0}^{N_i} \tilde{F}_{i, j} x^i (\log x)^j +\tilde{R}_{F, k_+}
\end{align}
with $\tilde{R}_{F, k_+}=\bold{O}(x^{k_++\epsilon})$. 
\end{lemma}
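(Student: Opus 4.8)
The plan is to exploit the explicit shape of $F$. By Lemma~\ref{lem-taylor} we may write $\log\frac{(\omega+\ddbar\psi)^n}{\omega^n}=g^{\bar j i}\psi_{i\bar j}+E(\psi)$ with $|E(\psi)|\le C|\nabla\bar\nabla\psi|_\omega^2$, and by Lemma~\ref{lem-linear*}, acting on $\theta$-independent functions, $g^{\bar j i}\partial_i\partial_{\bar j}=(N+1)+\Delta_D+x^2Re(C^*_{\alpha\bar n}\partial_x\partial_{z_\alpha})+O(x)$. Substituting these into $F(\psi)=N\psi+\Delta_D\psi-Q(\psi)$, the terms $N\psi+\Delta_D\psi$ and the $-\psi$ inside $Q$ absorb the leading operator $(N+1)+\Delta_D$ exactly, leaving
$$F(\psi)=f-E(\psi)-x^2Re\big(C^*_{\alpha\bar n}\partial_x\partial_{z_\alpha}\psi\big)-T_{O(x)}\psi.$$
Apart from the explicit datum $f$, which by \eqref{eq-f-tilde} already carries the full expansion $\sum_i\tilde f_i x^i+\bold{O}(x^\infty)$, every surviving term is \emph{order-gaining}: the cross term and the operator $T_{O(x)}$ each contribute a surplus power of $x$, while $E$ is quadratic in the second derivatives of its argument. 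This is precisely the mechanism that upgrades an order-$k$ input into an order-$k_+$ output.

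To obtain \eqref{eq-FTilde} I would split $\tilde v=\psi_k+R_k$ and treat the two pieces separately. Feeding $\psi_k$ into the formula above produces only terms $x^i(\log x)^j$ with $i\in I$: indeed $E(\psi_k)$ is a sum of products of two factors whose $\omega$-Hessians are $O(x^{i}(\log x)^{j})$ by Lemma~\ref{lem-deriv}, hence of order $x^{i_1+i_2}(\log x)^{j_1+j_2}$, and $i_1+i_2\in I$ exactly because $I$ is a \emph{monoid}; the cross term and $T_{O(x)}$ raise each $x^i$ to $x^{i+1}\in I$. Averaging in $\theta$ then replaces the $\theta$-dependent coefficient $C^*_{\alpha\bar n}$ by its smooth $\theta$-mean on $D$ and, by Lemma~\ref{lem:theta}, discards only $\bold{O}(x^\infty)$ fluctuations, so these assemble into $\sum_{i\le k_+}\sum_j\tilde F_{i,j}x^i(\log x)^j$. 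The contribution of $R_k=\bold{O}(x^{k+\epsilon})$ I would estimate by the same gain: the cross term applied to $R_k$ is $\bold{O}(x^{k+1+\epsilon})$, and $E(\psi_k+R_k)-E(\psi_k)$ is bounded by $|\nabla\bar\nabla\psi_k|_\omega\,|\nabla\bar\nabla R_k|_\omega+|\nabla\bar\nabla R_k|_\omega^2=\bold{O}(x^{k+1+\epsilon}\log x)$; since $k+1\ge k_+$ (because $1\in I$ forces $k_+\le k+1$), all of these are $\bold{O}(x^{k_++\epsilon})$ after shrinking $\epsilon$ slightly to absorb the logarithm. This yields $\tilde R_{F,k_+}=\bold{O}(x^{k_++\epsilon})$.

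For the first assertion I would match coefficients in \eqref{eq-av}. Expanding both sides in $x^i(\log x)^j$ for $i\le k$, the left side $N\tilde v+\Delta_D\tilde v$ contributes $(\Delta_D+\tfrac12 i^2+\tfrac12 i-1)c_{i,j}$ plus same-$x$-power terms of higher log-degree, while the order-gaining structure just established forces the coefficient of the right side at order $x^i$ to depend only on $f$ and on the $c_{i',j'}$ with $i'<i$. This triangular system is identical to the indicial recursion of \S\ref{formal}, with resonances occurring exactly where $\tfrac12 i^2+\tfrac12 i-1=\lambda_l$, i.e. $i=\overline m_l$; hence the $c_{i,j}$ of $\psi_k$ solve the same equations (uniquely up to the kernel elements of $E_{\lambda_l}$ at the resonant indices), so $\psi_k$ is one of the formal approximate solutions, and in particular $Q(\psi_k)=O(x^{k_+}(\log x)^{N_{k_+}})$.

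The step I expect to be the main obstacle is the remainder bookkeeping in the second paragraph. One must check that the non-holomorphic tubular coordinate change and the $\theta$-averaging do not spoil the loss-free gain in $x$ for $R_k$; in particular the operator $T_{O(x)}$ must be decomposed into its genuine $O(x)\cdot T_0$ part, which gains a power when applied to either $\psi_k$ or $R_k$, and its $T_\infty$ part, which is $\bold{O}(x^\infty)$ when applied to the solution $u=c_0+\tilde v+R$ and hence harmless. Verifying that every exponent produced by the quadratic term $E$ genuinely lands in the monoid $I$, and that the $\omega$-Hessian estimates of Lemma~\ref{lem-deriv} propagate to the remainder $R_k$, is where the definitions of $I$ and of the $\bold{O}(x^N)$-spaces do the real work.
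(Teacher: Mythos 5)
Your proposal is correct, and its core mechanism is the paper's: reduce $\tilde F(\tilde v)$ to the explicit part coming from $\psi_k$ plus a remainder, and kill the remainder using the order-gaining structure of $g^{\bar ji}\partial_i\partial_{\bar j}-(N+1+\Delta_D)$ (the cross term, the $O(x)$ part, and the quadratic error of Lemma \ref{lem-taylor}), together with $k_+\leq k+1$. Where you diverge is the first assertion and the logical order. The paper proves that assertion \emph{first} and more directly: since $Q(u)=0$ exactly, writing $u=c_0+\psi_k+R_k+R$ and using the integral linearization identity
\begin{align*}
0=Q(c_0+\psi_k)+\int_0^1 g^{\bar j i}_{\psi_k+t(R_k+R)}\,\partial_i\partial_{\bar j}(R_k+R)\,dt-(R_k+R)
\end{align*}
gives $Q(c_0+\psi_k)=\bold{O}(x^{k+\epsilon})$ in one stroke, after which the inductive uniqueness of the construction in \S \ref{formal} identifies $\psi_k$ with a formal solution; no expansion of $\tilde F(\tilde v)$ is needed for this step. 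You instead establish the $\tilde F$ expansion first and then match coefficients in \eqref{eq-av}, recovering the indicial recursion of \S \ref{formal} (including the resonances at $i=\overline m_l$) term by term. This works — the triangular dependence you invoke is exactly the order-gaining structure, and linear independence of the $x^i(\log x)^j$ justifies the matching — but it is more laborious and forces you to carry the resonant log-bookkeeping explicitly, whereas the paper's use of the exact equation makes that bookkeeping implicit. Your explicit decomposition $F(\psi)=f-E(\psi)-x^2\mathrm{Re}(C^*_{\alpha\bar n}\partial_x\partial_{z_\alpha}\psi)-T_{O(x)}\psi$ is a nice way to display why $F$ is order-gaining; the paper encodes the same fact by computing $F(c_0+\psi_k+R_k)-F(c_0+\psi_k)=(\Delta_D+N+1)R_k-\int_0^1 g^{\bar ji}_{\psi_k+tR_k}\partial_i\partial_{\bar j}R_k\,dt$ and citing Lemma \ref{lem-linear*}. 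Both arguments gloss the same fine point (that $F(c_0+\psi_k)$, involving inverse metrics of $\omega+\ddbar\psi_k$, genuinely expands with exponents in the monoid $I$), so you are at the paper's level of rigor there.
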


\begin{proof}  We first prove that if $\tilde v=\psi_k+R_k$, then $\psi_k$ coincides with one of the formal approximate solutions. By the discussion in \S \ref{formal}, it is easy to see that we only need to prove
$$Q(c_0+\psi_k)=\bold{O}(x^{k +\epsilon}),$$
then by the induction argument in \S \ref{formal}, $\psi_k$ must coincide with one of the formal approximate solutions for any $k\in \Z_{\geq 0}$.

In fact, since $u=c_0+\psi_k+R_k+R$ satisfies $Q(u)=0$, we must have
\beqs
0&=&Q(c_0+\psi_k+R_k+R)=Q(c_0+\psi_k)+\int_0^1 \frac{d}{dt} Q(c_0+\psi_k+t(R_k+R)) dt\\
&=& Q(c_0+\psi_k)+\int_0^1 g^{\bar j i}_{\psi_k+t(R_k+R)} \partial_i\partial_{\bar j}(R_k+R) dt-R_k-R.
\eeqs
Since $R_k+R=\bold{O}(x^{k+\epsilon})$, from the above equation we get $Q(c_0+\psi_k)=\bold{O}(x^{k +\epsilon}).$

Now we  prove that $\tilde F(\tilde v)$ has an expansion of order $O(x^{k_+})$ if $\tilde v$ has an expansion of order $O(x^k)$. For this, by definition, we only need to check the expansion for $F(c_0+\tilde v)$. 

If $k=0$, $F(c_0+\tilde{v})=F(c_0+R_0)=f+c_0+ O(x^2(\log x)^2)$, which confirms the claim. And if $k\geq 1$, 
\beqs
F(c_0+\psi_k+R_k)&=& \Delta_D (\psi_k+R_k)+N(c_0+\psi_k+R_k)\\
& & -Q(c_0+\psi_k)-(Q(c_0+\psi_k+R_k)-Q(c_0+\psi_k))\\
&=& F(c_0+\psi_k) +(\Delta_D+N+1)R_k-\int_0^1 g^{\bar j i}_{\psi_k+tR_k}\partial_i\partial_{\bar j} R_k dt\\
&=& F(c_0+\psi_k) +(\Delta_D+N+1)R_k-g^{\bar j i}\partial_i\partial_{\bar j} R_k+\bold{O}(x^{k_++\epsilon})\\
&=& F(c_0+\psi_k)+\bold{O}(x^{k_++\epsilon}),
\eeqs
where the last equality comes from Lemma \ref{lem-linear*}. 
After averaging in $\theta$, $\tilde{F}(\psi_k)$ has explicit expansion of any order.  
From the above equality, we have $\tilde F(\psi_k+R_k)-\tilde F(\psi_k)=\bold{O}(x^{k_++\epsilon})$. So we conclude that $\tilde F(\tilde v)$ has an expansion of order $O(x^{k_+})$. \qed 
\end{proof}

Assume $-\Delta_D \varphi_l = \lambda_l \varphi_l$ for analytic functions $\varphi_l$ on $D$ with $\int_D \varphi^2_l dv_D=1$, where $dv_D$ is the volume form associated to the canonical K\"ahler-Einstein metric $\omega_D$. Then $\{\varphi_l\}_{l=0}^\infty$ is an orthonormal basis of $L^2(D, dv_D)$.
As for any fixed $x>0$, $\tilde{v}$ is a smooth function on $D$,
 we write $\tilde{v}=\sum_l \tilde{v}_l(x) \varphi_l,$ where
\begin{align*}
\tilde{v}_l=\int_{D} (\tilde{v} \varphi_l)  dv_D,
\end{align*}

 Then
\begin{align}\label{eq-EquCoeff}
-\lambda_l \tilde{v}_l+N \tilde{v}_l=(\tilde{F}(\tilde{v}))_l,
\end{align}
where
\begin{align*}
(\tilde{F}(\tilde{v}))_l(x)=\int_{D} \tilde{F}(\tilde{v}) \varphi_l  dv_D.
\end{align*}
We view \eqref{eq-EquCoeff} as a non-homogeneous second order ordinary differential equation with respect to $x$, then we solve out by choosing a fixed small $x_0>0$:
\begin{align}\begin{split} \label{eq-ODESolu}
\tilde{v}_l=& C_1x^{\overline{m}_l}+C_2 x^{\underline{m}_l}-\frac{ 2x^{\overline{m}_l} }{\overline{m}_l-\underline{m}_l}  \int_{x}^{x_0} (\tilde{F}(\tilde{v}))_l x^{-1-\overline{m}_l} dx\\
&\qquad+\frac{ 2x^{\underline{m}_l} }{\overline{m}_l-\underline{m}_l}  \int_{x}^{x_0} (\tilde{F}(\tilde{v}))_l x^{-1-\underline{m}_l} dx.
\end{split}
\end{align}
To determine $C_1, C_2$, we first take $x=x_0$ to get
\begin{align*}
\tilde{v}_l(x_0)= C_1 x_0^{\overline{m}_l}+C_2 x_0^{\underline{m}_l}.
\end{align*}
Since $\tilde{v}_l \in L^\infty$, we then multiply \eqref{eq-ODESolu} by $x^{-\underline{m}_l}$, and let $x \rightarrow 0$,
\begin{align*}
0=C_2+ \frac{2 }{\overline{m}_l-\underline{m}_l}  \int^{x_0}_{0}    (\tilde{F}(\tilde{v}))_l x^{-1-\underline{m}_l}    dx.
\end{align*}
We conclude,
\begin{align}\begin{split}\label{eq-ODESolu1}
\tilde{v}_l &= \Big(\tilde{v}_l (x_0)\cdot x_0^{-\overline{m}_l}+
\frac{2 x_0^{\underline{m}_l-\overline{m}_l }}{\overline{m}_l-\underline{m}_l} 
\int_0^{x_0}   (\tilde{F}(\tilde{v}))_l x^{-1-\underline{m}_l}   dx \Big)\cdot x^{\overline{m}_l}\\
&\qquad-\frac{ 2 x^{\overline{m}_l} }{\overline{m}_l-\underline{m}_l}  \int_{x}^{x_0}   (\tilde{F}(\tilde{v}))_l x^{-1-\overline{m}_l}   dx\\
&\qquad-\frac{ 2 x^{\underline{m}_l} }{\overline{m}_l-\underline{m}_l}  \int_{0}^{x}    (\tilde{F}(\tilde{v}))_l x^{-1-\underline{m}_l}   dx.
\end{split}
\end{align}

Then
\begin{align}\begin{split}\label{eq-ODESolu1}
\tilde{v} &= \sum_l \frac{ \tilde{v}_l (x_0)x^{\overline{m}_l} \varphi_l  }{ x_0^{\overline{m}_l}}+ \sum_l
\frac{2 x_0^{\underline{m}_l-\overline{m}_l }x^{\overline{m}_l} \varphi_l }{\overline{m}_l-\underline{m}_l} 
\int_0^{x_0}   (\tilde{F}(\tilde{v}))_l x^{-1-\underline{m}_l}   dx \\
&\qquad- \sum_l \frac{ 2 x^{\overline{m}_l}\varphi_l }{\overline{m}_l-\underline{m}_l}  \int_{x}^{x_0}   (\tilde{F}(\tilde{v}))_l x^{-1-\overline{m}_l}   dx\\
&\qquad-\sum_l \frac{ 2 x^{\underline{m}_l}\varphi_l }{\overline{m}_l-\underline{m}_l}  \int_{0}^{x}    (\tilde{F}(\tilde{v}))_l x^{-1-\underline{m}_l}   dx.
\end{split}
\end{align}

Fix an index $A>0$. The first summation of \eqref{eq-ODESolu1} is easy to treat:
\begin{align}\label{eq-non-int}
 \sum_l \frac{ \tilde{v}_l (x_0)x^{\overline{m}_l} \varphi_l  }{ x_0^{\overline{m}_l}}= \sum_{\overline{m}_l<A} \left(\frac{ \tilde{v}_l (x_0) \varphi_l  }{ x_0^{\overline{m}_l}}\right)\cdot x^{\overline{m}_l} +H_{A, 0}x^A,
\end{align}
where 
\begin{align*}
H_{A, 0}= \sum_{\overline{m}_l\geq A} \frac{ \tilde{v}_l (x_0)x^{\overline{m}_l-A} \varphi_l  }{ x_0^{\overline{m}_l}}. 
\end{align*}
This is an expansion with respect to $x$, and it only has finitely many terms with $\overline{m}_l < A$, and the corresponding coefficients only depend on $z^\prime$. Our goal is to estimate the derivatives of the remainder of the form
\begin{align*}
(x\partial_x)^p  \partial_{z^\prime, \bar{z}^\prime}^q H_{A, 0}.
\end{align*}
To this end, notice that for any $N\in \mathbb{N}$,
\begin{align}\begin{split}\label{eq-vl}
|\tilde v_l(x_0)| &= \frac{1}{\lambda_l^N} \Big|\int_D \tilde v(\cdot ,x_0)\cdot \Delta_D^N \varphi_l (\cdot)dv_D \Big|\\
&=\frac{1}{\lambda_l^N} \Big|\int_D (\Delta_D^N \tilde v(\cdot, x_0)\cdot \varphi_l(\cdot) dv_D\Big|\\
&\leq \frac{C(\tilde v, x_0, N)}{\lambda_l^N}.
\end{split}
\end{align}
Here $\Delta_D^N \tilde v(\cdot, x_0)$ is evaluated at $x=x_0$, so bounded by the interior estimates of $\tilde v$. Then for any $p, q\in \mathbb{N}$, by \eqref{eq-vl},
\begin{align*}
\left\lvert(x\partial_x)^p  \Delta_D^q  H_{A,0}\right\rvert 
&=\left\lvert(x\partial_x)^p  \Delta_D^q  \sum_l \frac{ \tilde{v}_l (x_0)x^{\overline{m}_l-A} \varphi_l  }{ x_0^{\overline{m}_l}}\right\rvert \\&= \sum_l \left\lvert (\overline{m}_l-A)^p\lambda_l^q  \frac{ \tilde{v}_l (x_0)x^{\overline{m}_l} \varphi_l  }{ x_0^{\overline{m}_l}}\right\rvert\\
&\leq  C(\tilde v, x_0, N)  \sum_l \left\lvert \lambda_l^{\frac{p}{2} + q-N}  \varphi_l  \right\rvert.
\end{align*}
By the standard estimates of eigenvalues and eigenfunctions of $\Delta_D$, (cf. for example \cite{Sog} Corollary 5.1.2), as $\dim_\mathbb{R} D =2n-2$, $\lambda_l \sim l^\frac{1}{n-1}$ as $l \rightarrow \infty$ and $|\lambda_l^{-\frac{2n-3}{4}} \varphi_l| \leq C( \omega_D)$. So we can set $N>\frac{p}{2}+q+\frac{2n-3}{4}$, so that
\begin{align*}
\sum_l \left\lvert \lambda_l^{\frac{p}{2} + q-N}  \varphi_l \right\rvert \leq C(N, \omega_D).
\end{align*}
Note that the interchange of $\Delta_D$ with infinite summation is justified by the above estimate.
Now $(x\partial_x)^p  \Delta_D^q  H_{A,0}$ is bounded, which further implies that $(x\partial_x)^p  \Delta_D^{q-1}  H_{A,0}$ is $C^{1,\alpha}_\omega(D)$ in $z^\prime, \bar{z}^\prime$ by standard elliptic estimates. Eventually it implies, (probably with a different $q$),
 \begin{align}\label{eq-non-int-estm}
 |(x\partial_x)^p  \partial_{z^\prime, \bar{z}^\prime}^q  H_{A,0}|\leq C(v, x_0, p, q, \omega_D).
 \end{align}

The main technical result of this section is the following proposition, whose proof follows the same line as above:

\begin{proposition} \label{thm-IntF} Fix an index $A>0$.
Assume that on $D\times (0, x_0]$, we have a function $F(x, z^\prime)=x^i (\log x)^j\cdot w(x, z^\prime)$ for some $i\in I, j\in \Z$, such that $i\leq A, 0\leq j\leq N_i$, and \begin{align*}
|x^l \partial^l_x \partial^m_{z',\bar z'} w|\leq C_{l, m},
\end{align*} 
under the local coordinate system. In addition, we assume that $w$ only depends on $z^\prime$ when $i<A$. 

Denote
\begin{align*}
F_l= \int_D F \cdot \varphi_l dv_D.
\end{align*}
 Then the following  terms
\begin{align*}
&H_1= \sum_{l=1}^\infty \frac{x_0^{\underline{m}_l-\overline{m}_l}  x^{\overline{m}_l} \varphi_l}{\overline{m}_l-\underline{m}_l}\int_0^{x_0} x^{-1-\underline{m}_l}F_l dx \\
&H_2= \sum_{l=1}^\infty \frac{x^{\underline{m}_l} \varphi_l}{\overline{m}_l-\underline{m}_l}\int_0^{x} x^{-1-\underline{m}_l}F_l dx \\
&H_3 =\sum_{l=1}^\infty  \frac{x^{\overline{m}_l} \varphi_l}{\overline{m}_l-\underline{m}_l}\int_x^{x_0} x^{-1-\overline{m}_l}F_l dx,
\end{align*}
have an expansion of the form:
\begin{itemize}
\item if $i=A$,
\begin{align}\label{eq-HExp1}
\sum_{l\in I, l<A}  H_{l, 0}(z^\prime) x^l  + \sum_{m=0}^{j+1} H_{A, m}(x, z^\prime) x^A(\log x)^m,
\end{align}
\item if $i<A$,
\begin{align}\begin{split}\label{eq-HExp2}
&\sum_{l\in I, l<i}  H_{l, 0}(z^\prime) x^l  + \sum_{m=0}^{j+1} H_{i, m}( z^\prime) x^i(\log x)^m\\
&\qquad+\sum_{l\in I, i<l<A}  H_{l, 0}(z^\prime) x^l  + \sum_{m=0}^{j+1} H_{A, m}(x, z^\prime) x^A,
\end{split}
\end{align}
\end{itemize}
where all the coefficients $H_{l, m}$'s satisfy for any $q\in  \mathbb{N}$,
\begin{align}\label{eq-HlmL2}
|\partial^q_{z',\bar z'} H_{l, m}|\leq C(l, m, i, j, q).
\end{align}
Here $H_{i, j+1}$ is not a zero function only if $i=\overline{m}_l$ for some $l \in \mathbb{N}$.

In addition, for any $p,q\in \mathbb{N}$, we have in $D \times
(0, {x_0}),$
\begin{align}\label{eq-Hlm-mp}
|x^p \partial^p_x \partial^q_{z',\bar z'} H_{l, m}|\leq C(l, m, i, j, p, q).
\end{align}
\end{proposition}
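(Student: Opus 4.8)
The plan is to treat the three sums $H_1,H_2,H_3$ by first computing the inner radial integrals for each fixed index $l$ and then resumming, following verbatim the model estimate already carried out for $H_{A,0}$ in \eqref{eq-vl}--\eqref{eq-non-int-estm}. Writing $F_l(x)=x^i(\log x)^j w_l(x)$ with $w_l(x)=\int_D w(x,\cdot)\varphi_l\,dv_D$ (which is constant in $x$ precisely when $i<A$, by hypothesis), the integrals reduce to elementary ones of the type $\int t^{a-1}(\log t)^j\,dt$, whose antiderivative is $t^a$ times a degree-$j$ polynomial in $\log t$ when $a\ne 0$, and is $(\log t)^{j+1}/(j+1)$ when $a=0$. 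The relevant exponents are $a=i-\overline m_l$ in $H_3$ and $b=i-\underline m_l$ in $H_1,H_2$; since $\underline m_l\le -2$ and $i\ge 0$ we always have $b\ge 2>0$, so no resonance occurs there, whereas $a=0$ happens exactly when $i=\overline m_l$. This is the mechanism that produces the extra power $(\log x)^{j+1}$ and explains why $H_{i,j+1}$ can be nonzero only when $i=\overline m_l$ for some $l$.

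Second, I would classify the resulting monomials by their homogeneity in $x$. Multiplying the primitives back by the prefactors $x^{\overline m_l}$ or $x^{\underline m_l}$ and evaluating at the limits, each $l$ produces: terms proportional to $x^{\overline m_l}$ (from $H_1$ and the $x_0$-endpoint of $H_3$), and terms proportional to $x^i(\log x)^m$ with $0\le m\le j$ (or $m=j+1$ in the resonant case) coming from the $x$-endpoint of $H_2$ and $H_3$; the would-be $x^{\underline m_l}$ blow-up in $H_2$ cancels because the lower limit $0$ of $\int_0^x t^{b-1}(\cdots)\,dt$ vanishes thanks to $b>0$. I would then resum. The $x^i(\log x)^m$ family sums over $l$ into functions $H_{i,m}(z')=\sum_l(\mathrm{scalar}_l)\varphi_l(z')$; the $x^{\overline m_l}$ terms with $\overline m_l<A$ are finitely many (since $\overline m_l\to\infty$) and give the coefficients $H_{\overline m_l,0}(z')$, while those with $\overline m_l\ge A$ are collected into the $x^A$-remainder, exactly as in \eqref{eq-non-int}--\eqref{eq-non-int-estm} after factoring $x^A$ and using $(x/x_0)^{\overline m_l-A}\le 1$ for $x\le x_0$. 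This yields \eqref{eq-HExp1} when $i=A$ (where $w$, and hence the top coefficients $H_{A,m}$, may still depend on $x$) and \eqref{eq-HExp2} when $i<A$ (where $w_l$ is constant, so $H_{i,m}$ depends only on $z'$).

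Third, all convergence and the derivative bounds \eqref{eq-HlmL2}, \eqref{eq-Hlm-mp} follow from the two standard inputs already in use: integrating by parts against $\Delta_D^N$ gives $|w_l(x)|\le C_N\lambda_l^{-N}$ for any $N$ (and the same for its $x$-derivatives, via $|x^l\partial_x^l\partial^m_{z',\bar z'}w|\le C_{l,m}$), while the spectral estimates $\lambda_l\sim l^{1/(n-1)}$ and $|\lambda_l^{-(2n-3)/4}\varphi_l|\le C$ control the eigenfunctions. Applying $(x\partial_x)^p$ and $\Delta_D^q$ to the sums brings down at most polynomial factors in $\overline m_l\sim\sqrt{2\lambda_l}$ and $\lambda_l$ by \eqref{eq-mk} (the denominators $\overline m_l-\underline m_l\sim 2\sqrt{2\lambda_l}$ only help), so choosing $N$ large makes every series converge absolutely together with all derivatives; the passage from $\Delta_D$-bounds to $\partial_{z',\bar z'}$-bounds is the elliptic-regularity step of \eqref{eq-non-int-estm}.

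The main obstacle I anticipate is the bookkeeping of the resonant and near-resonant contributions: one must verify that differentiating the tail series $\sum_{\overline m_l\ge A}$ term by term is legitimate and still gives $O(x^A)$ with uniformly bounded $z'$-derivatives, and simultaneously keep precise track of which $(\log x)$-powers survive, so that the power $j+1$ appears only at the resonance $i=\overline m_l$. The delicate point is that, unlike the clean $H_{A,0}$ computation, here the forcing carries its own $\log x$ factors and (when $i=A$) a genuine $x$-dependence through $w$, so the elementary integration must be performed keeping the full polynomial-in-$\log$ structure intact before resummation.
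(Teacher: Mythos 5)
Your proposal is correct and follows essentially the same route as the paper's proof: the spectral decay $|F_l|\leq C_N\lambda_l^{-N}x^i(-\log x)^j$ via integration by parts against $\Delta_D^N$, the eigenfunction bounds $\lambda_l\sim l^{1/(n-1)}$ and $|\lambda_l^{-(2n-3)/4}\varphi_l|\leq C$, the split into finitely many modes with $\overline{m}_l<A$ versus a tail absorbed into the $x^A$-remainder, the resonance $i=\overline{m}_l$ in $H_3$ producing the $(\log x)^{j+1}$ term, and absorbing the factors from $(x\partial_x)^p$ and $\Delta_D^q$ by taking $N$ large, with elliptic regularity converting $\Delta_D$-bounds into $\partial_{z',\bar z'}$-bounds. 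Your explicit observation that $i-\underline{m}_l\geq 2>0$ rules out resonance in $H_1,H_2$ is a clean way of stating what the paper leaves implicit, but it is the same argument.
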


\begin{proof}
First we show the expansion and \eqref{eq-HlmL2}. Notice that $F_l$'s only depend on $x$.
For any integer $N>0$, $|\Delta_D^N F|\leq C(F, N)x^i (-\log x)^j$. So we have the estimate of generalized Fourier coefficients, if $\lambda_l\neq 0$,
\begin{align}\begin{split}\label{eq-Fl}
|F_l| &= \frac{1}{\lambda_l^N} \Big|\int_D F\cdot \Delta_D^N \varphi_l dv_D \Big|\\
&=\frac{1}{\lambda_l^N} \Big|\int_D (\Delta_D^N F) \varphi_l dv_D\Big|\\
&\leq \frac{C(F, N)}{\lambda_l^N}x^i (-\log x)^j.
\end{split}
\end{align}

Now we look into the three integrals, with  $H_1$ first. Formally $H_1$ is already in the form of \eqref{eq-HExp1} or \eqref{eq-HExp2}, as
\begin{align*}
H_1=  \sum_{\overline{m}_l <A}\left( \frac{x_0^{\underline{m}_l-\overline{m}_l}  \varphi_l}{\overline{m}_l-\underline{m}_l}\int_0^{x_0} x^{-1-\underline{m}_l}F_l dx\right) \cdot x^{\overline{m}_l}  + H_{A, 0}(z^\prime, x)x^A,
\end{align*}
where
\begin{align*}
H_{A, 0}= \sum_{\overline{m}_l \geq A} \frac{x_0^{\underline{m}_l-\overline{m}_l}  x^{\overline{m}_l-A} \varphi_l}{\overline{m}_l-\underline{m}_l}\int_0^{x_0} x^{-1-\underline{m}_l}F_l dx.
\end{align*}
Here $H_{A, 0}$ is an infinite summation. 
Again, as $\dim_\mathbb{R} D =2n-2$, we have $\lambda_l \sim l^\frac{1}{n-1}$ as $l \rightarrow \infty$ and $|\lambda_l^{-\frac{2n-3}{4}} \varphi_l| \leq C( \omega_D)$. 
Then 
 if  $\overline{m}_l\geq  A$, applying \eqref{eq-Fl},
\begin{align*}
\Big| \frac{x_0^{\underline{m}_l-\overline{m}_l}  x^{\overline{m}_l-A} \varphi_l}{\overline{m}_l-\underline{m}_l}\int_0^{x_0} x^{-1-\underline{m}_l}F_l dx \Big| &\leq  \lambda_l^{-N+\frac{2n-3}{4}} x_0^{\underline{m}_l-A} C(F) \cdot |\lambda_l^{-\frac{2n-3}{4}} \varphi_l|\\
&\leq C(F, N,  \omega_D, x_0) \lambda_l^{-N+\frac{2n-3}{4}}.
\end{align*}
We set $N=\frac{5}{2} n-\frac{3}{4}$, then
\begin{align*}
\sum_{\overline{m}_l\geq A}\lambda_l^{-N+\frac{2n-3}{4}}=\sum_{\overline{m}_l\geq A}\lambda_l^{-2n} \leq \sum_{\overline{m}_l\geq A}l^{-\frac{2n}{n-1}}
\end{align*}
which is bounded by a constant depending only on $n$. Hence
the sum of terms in $H_{A, 0}$  is convergent. In addition, for any $q \in\mathbb{N}$,
\begin{align*}
|\Delta_D^q H_{A, 0}| &= \left\lvert\sum_{\overline{m}_l \geq A} \frac{x_0^{\underline{m}_l-\overline{m}_l}  x^{\overline{m}_l-A} \lambda_l^p\varphi_l}{\overline{m}_l-\underline{m}_l}\int_0^{x_0} x^{-1-\underline{m}_l}F_l dx\right\rvert\\&\leq C(F, N, \omega_D, x_0)\sum_{\overline{m}_l\geq A}  \lambda_l^{p-N+\frac{2n-3}{4}}.
\end{align*}
We can set $N$ much larger than $q$, such that $\Delta_D^q H_{A, 0}$ is bounded, which further implies that $\Delta_D^{q-1} H_{A, 0}$ is $C^{1,\alpha}$ in $z^\prime, \bar{z}^\prime$. This eventually implies \eqref{eq-HlmL2}.

\bigskip
The discussion of $H_2$ is similar. 
The difference here is that all terms are of order $x^i (-\log x)^m$ for some $0\leq m\leq j$. 
\begin{itemize}

\item
if $i= A$, we write $H_2$ as $H_{A, j}x^A (\log x)^j$. We estimate $\Delta_D^q H_{A, j}$, by \eqref{eq-Fl},
\begin{align*}
& \left\lvert \Delta_D^q\left( \sum_{l=1}^\infty \frac{x^{\underline{m}_l-A}(\log x)^{-j} \varphi_l}{\overline{m}_l-\underline{m}_l}\int_0^{x} x^{-1-\underline{m}_l}F_l dx \right) \right\rvert\\
&\qquad = \left\lvert\sum_{l=1}^\infty \frac{x^{\underline{m}_l-A}(\log x)^{-j} \lambda_l^q\varphi_l}{\overline{m}_l-\underline{m}_l}\int_0^{x} x^{-1-\underline{m}_l}F_l dx  \right\rvert\\
&\qquad \leq   \sum_{l=1}^\infty \left\lvert  C(F, N) \lambda_l^{-N+q}\varphi_l \cdot\frac{x^{\underline{m}_l-A}(\log x)^{-j}}{\overline{m}_l-\underline{m}_l}\int_0^{x} x^{A-1-\underline{m}_l}(\log x)^j dx \right\rvert \\
&\qquad \leq  \sum_{i=1}^\infty  C(F, N, q, j)  \lambda_l^{-N+q}|\varphi_l|,
\end{align*}
which converges if $N$ is large comparing to $q$ and $n$.

\item
if $i<A$, by the assumption, $w$ only depends on $z^\prime$. Then
\begin{align*}
\int_0^{x} x^{-1-\underline{m}_l}F_l dx= \int_D w\varphi_l dv_D \cdot\int_0^{x} x^{i-1-\underline{m}_l}(\log x)^j dx,
\end{align*}
which generates terms like  $w_l\cdot  x^{i -\underline{m}_l}  (- \log x)^m$, for $0\leq m\leq j$. Hence we have the expansion as \eqref{eq-HExp2}. For the estimates of coefficients, we can proceed in a similar way as in case $i=A$ to derive \eqref{eq-HlmL2}.
\end{itemize}

\bigskip
For $H_3$, notice when $\overline{m}_l=i$, $\int x^{-1-\overline{m}_l} \cdot x^i (\log x)^j dx= \frac{1}{j+1}(\log x)^{j+1}$. So we may have a term of order $x^i (\log x)^{j+1}$ in the expansion of $H_3$. 
In $H_3$, for terms with $\overline{m}_l\geq A$, we just apply \eqref{eq-Fl} to show that
\begin{align}\label{eq-H3-mi>A}
\sum_{\overline{m}_l\geq A} \frac{x^{\overline{m}_l} \varphi_l}{\overline{m}_l-\underline{m}_l}\int_x^{x_0} x^{-1-\overline{m}_l}F_l dx
\end{align}
can be written as, 
\begin{itemize}
\item
if $i=A$, 
\begin{align*}
 H_{A, m}(z^\prime, x) x^A (\log x)^{m},
\end{align*}
where $m=j+1$ if $A=\overline{m}_l$ for some $l\in \mathbb{N}$; otherwise $m=j$ . 
\item if $i<A$, as $w$ only depends on $z^\prime$, \eqref{eq-H3-mi>A} can be written as
\begin{align*}
\sum_{m=0}^{j} H_{i, m}(z^\prime) x^i (\log x)^m + H_{A , 0}(z^\prime, x) x^A .
\end{align*}
\end{itemize}
All coefficients $H_{l,m}$ satisfy \eqref{eq-HlmL2}. 
For the finite terms with $\overline{m}_l<A$, (essentially we do not worry about finite summation),
\begin{itemize}

\item
if $i=A$,
\begin{align*}
\sum_{\overline{m}_l< A} \frac{x^{\overline{m}_l} \varphi_l}{\overline{m}_l-\underline{m}_l}\int_x^{x_0} x^{-1-\overline{m}_l}F_ldr &=\sum_{\overline{m}_l< A} \frac{x^{\overline{m}_l} \varphi_l}{\overline{m}_l-\underline{m}_l}\int_0^{x_0} x^{-1-\overline{m}_l}F_l dr\\
&\qquad \qquad - \sum_{\overline{m}_l< A} \frac{x^{\overline{m}_l} \phi_l}{\overline{m}_l-\underline{m}_l}\int_0^{x} x^{-1-\overline{m}_l}F_l dr,
\end{align*}
which can be dealt with in the same way as for $H_1$ and $H_2$, and
\item if $i<A$, $w$ only depends on $ z^\prime$, and we still derive \eqref{eq-HlmL2} by applying the explicit integral formula of $x^{-1-\overline{m}_l} \cdot x^i (\log x)^j$
and \eqref{eq-Fl}.
\end{itemize}
\bigskip

Secondly, we prove \eqref{eq-Hlm-mp}. 
As $H_{l, m}$ is independent of $x$ if $l< A$, so we only need to consider $(x\partial_x)^p H_{A, m}$. The only trouble is that $x\partial_x (x^{\overline{m}_l})= \overline{m}_l x^{\overline{m}_l}$, which  produces an extra  factor $\overline{m}_l \sim \lambda_l^{\frac{1}{2}}$. So we simply increase $N$ to deal with this factor. 
 \qed
\end{proof}

Now we continue the proof of Theorem \ref{main}.
Recall that $\tilde{F}(\tilde{v})$ has the expansion \eqref{eq-FTilde} by induction, and $\tilde{v}$ can be solved out from $\tilde{F}(\tilde{v})$ by \eqref{eq-ODESolu1}.

First apply \eqref{eq-non-int}, \eqref{eq-non-int-estm} with $A=k_++\epsilon$ to derive that $\sum_l \frac{ \tilde{v}_l (x_0)x^{\overline{m}_l} \varphi_l  }{ x_0^{\overline{m}_l}}$ has a boundary expansion of order $k_+.$ Here $\epsilon$ is well set such that $\overline{m}_l <A=k_++\epsilon$ implies that $\overline{m}_l\leq k_+$.

For each term $ \tilde{F}_{i, j} x^i (\log x)^j$ or $\tilde{R}_{F, k_+}$ in the expansion of \eqref{eq-ODESolu1}, applying Proposition \ref{thm-IntF} with $A= k_+ +\epsilon$ and $F= \tilde{F}_{i, j} x^i (\log x)^j, w= \tilde{F}_{i, j}$ or $F=\tilde{R}_{F, k_+} ,  w= x^{-k_+-\epsilon} \tilde{R}_{F, k_+}$ respectively, we derive finite many expansions in the form of \eqref{eq-HExp1}, \eqref{eq-HExp2}, with \eqref{eq-Hlm-mp} holds.

Summing up these finite many expansions, $v$ has a boundary expansion of order $O(x^{k_+})$ in the sense of Definition \ref{def-exp}.
Then we complete the induction.

\appendix

\section{Proof of Lemma \ref{lem-theta*}-Lemma \ref{lem-deriv}} 
\begin{proof}[Proof of Lemma \ref{lem-theta*}]
By chain rule, we have
$$\frac{\partial}{\partial \theta}=\frac{\partial z^*_\alpha}{\partial \theta}\frac{\partial}{\partial z^*_\alpha}+\frac{\partial \overline{z^*_\alpha}}{\partial \theta}\frac{\partial}{\partial \overline{z^*_\alpha}}+\frac{\partial x^*}{\partial \theta}\frac{\partial}{\partial x^*}+\frac{\partial \theta^*}{\partial \theta}\frac{\partial}{\partial \theta^*}.$$
It is easy to see that the coefficients of the first 3 terms are all of the order $O(r)$, while $\frac{\partial \theta^*}{\partial \theta}$ is of order $O(1)$ and non-vanishing near $D$. So we can prove by induction that derivatives with respect to $\theta^*$ are also of order $O(x^\infty)$. 
\qed
\end{proof}

\begin{proof}[Proof of Lemma \ref{lem-linear*}]
By Lemma \ref{lem:Lap_cx}, we first check
$$\frac{\partial}{\partial x}=\frac{\partial z^*_\alpha}{\partial x}\frac{\partial}{\partial z^*_\alpha}+\frac{\partial \overline{z^*_\alpha}}{\partial x}\frac{\partial}{\partial \overline{z^*_\alpha}}+\frac{\partial x^*}{\partial x}\frac{\partial}{\partial x^*}+\frac{\partial \theta^*}{\partial x}\frac{\partial}{\partial \theta^*}$$
 term by term:
$$\frac{\partial z^*_\alpha}{\partial x}=\frac{\partial z^*_\alpha}{\partial r}\frac{dr}{d x}=\frac{r}{2x^2}\frac{\partial z^*_\alpha}{\partial r}=O(x^\infty),$$
Similar for $\frac{\partial \overline{ z^*_\alpha}}{\partial x}$. 
$$\frac{\partial\theta^*}{\partial x}=\frac{\partial\theta^*}{\partial r}\frac{r}{2x^2}=\frac{r}{2x^2}Im(\frac{1}{z^*_n}\frac{\partial z^*_n}{\partial r})=O(x^{-2}).$$
By the previous Lemma, we can ignore all derivatives with respect to $\theta^*$, too. So we only need to compute $\frac{\partial x^*}{\partial x}$. Note that we have the Taylor expansion near $D$:
$$r^*=Ar+O(r^2)$$
where $A>0$ is a locally defined function on $D$. Then we have
$$\frac{\partial x^*}{\partial x}=\frac{dx^*}{dr^*}\frac{\partial r^*}{\partial r}\frac{dr}{dx}=\frac{r(x^*)^2}{x^* x^2}\frac{\partial r^*}{\partial r}=\frac{A+O(r)}{A+O(r)}(1+4\log A\ x+O(x^2))=1+4\log A\ x+O(x^2)),$$
and so
$$\frac{\partial^2 x^*}{\partial x^2}=4\log A+O(x)=4\log A+O(x^*).$$
Then we conclude that
$$({1\over 2}x^2\frac{\partial^2}{\partial x^2}+x\frac{\partial}{\partial x})v=({1\over 2}(x^*)^2\frac{\partial^2}{(\partial x^*)^2}+x^*\frac{\partial}{\partial x^*})v+O(x^*)$$

Secondly we check tangential directions,
$$\frac{\partial}{\partial z_\alpha}=\frac{\partial z^*_\beta}{\partial z_\alpha}\frac{\partial}{\partial z^*_\beta}+
\frac{\partial \bar z^*_\beta}{\partial z_\alpha}\frac{\partial}{\partial \bar z^*_\beta}
+\frac{\partial x^*}{\partial z_\alpha}\frac{\partial}{\partial x^*}+\frac{\partial \theta^*}{\partial z_\alpha}\frac{\partial}{\partial \theta^*}.$$
We compute term by term:

First we can write $z^*_\beta=z_\beta+a_\beta(z)$, where $a_\beta$ is a local smooth function of $z$ such that $a_\beta(z_1,\dots, z_{n-1},0)\equiv 0$. This implies 
$$\frac{\partial a_\beta}{\partial z_\alpha}=O(r),\quad \frac{\partial\bar a_\beta}{\partial z_\alpha}=O(r),\quad $$
so
$$\frac{\partial z^*_\beta}{\partial z_\alpha}=\delta_{\alpha\beta}+O(r),\quad \frac{\partial \bar z^*_\beta}{\partial z_\alpha}=O(r).$$
For similar reason, all the higher order purely tangential derivatives of $z^*_\alpha$ are all of order $O(r)$.
Second, we have
$$\frac{\partial x^*}{\partial z_\alpha}=\frac{2(x^*)^2}{r^*}\big(\cos\theta^*\frac{\partial\xi}{\partial z_\alpha}+\sin\theta^*\frac{\partial\eta}{\partial z_\alpha}\big).$$
Recall that $\xi|_D\equiv 0$, we conclude that $\frac{\partial\xi}{\partial z_\alpha}$ (and in fact all the tangential derivatives) is of order $O(x^*)$, from which we conclude that
$$\frac{\partial x^*}{\partial z_\alpha}=O((x^*)^2).$$
Similarly, 
$$\frac{\partial \theta^*}{\partial z_\alpha}=\frac{1}{r^*}\big(-\sin\theta^*\frac{\partial\xi}{\partial z_\alpha}+\cos\theta^*\frac{\partial\eta}{\partial z_\alpha}\big)=O(1).$$

From the expression of $\frac{\partial x^*}{\partial z_\alpha}$, we can further calculate $\frac{\partial^2 x^*}{\partial z_\alpha\partial\bar z_\beta}$ by chain rule, and easy  to see that all but one term are of order $O((x^*)^2)$. The remaining term is
$$-\frac{\partial x^*}{\partial z_\alpha}\frac{1}{r^*}\frac{\partial r^*}{\partial \bar z_\beta}=-\frac{\partial x^*}{\partial z_\alpha}\frac{1}{r^*}\big(\cos\theta^*\frac{\partial\xi}{\partial \bar z_\beta}+\sin\theta^*\frac{\partial\eta}{\partial \bar z_\beta}\big).$$
This is again of order $O((x^*)^2)$. For the same reason,
$$\frac{\partial^2 \theta^*}{\partial z_\alpha\partial\bar z_\beta}=O(1).$$

So we get
$$\frac{\partial}{\partial z_\alpha}=\frac{\partial}{\partial z^*_\alpha}+O((x^*)^2) \frac{\partial}{\partial x^*} +O((x^*)^\infty) \text{ operators}$$
For second order derivatives, we have 
\beqs
\frac{\partial^2}{\partial z_\alpha\partial\bar z_\beta}&=& 
\Big(\frac{\partial z^*_\mu}{\partial z_\alpha}\frac{\partial}{\partial z^*_\mu}+
\frac{\partial \bar z^*_\mu}{\partial z_\alpha}\frac{\partial}{\partial \bar z^*_\mu}
+\frac{\partial x^*}{\partial z_\alpha}\frac{\partial}{\partial x^*}+\frac{\partial \theta^*}{\partial z_\alpha}\frac{\partial}{\partial \theta^*}\Big)\circ\\
& &\Big(\frac{\partial z^*_\nu}{\partial \bar z_\beta}\frac{\partial}{\partial z^*_\nu}+
\frac{\partial \bar z^*_\nu}{\partial \bar z_\beta}\frac{\partial}{\partial \bar z^*_\nu}
+\frac{\partial x^*}{\partial \bar z_\beta}\frac{\partial}{\partial x^*}+\frac{\partial \theta^*}{\partial \bar z_\beta}\frac{\partial}{\partial \theta^*}\Big)\\
&=&
\frac{\partial \bar z^*_\nu}{\partial\bar z_\beta}\frac{\partial  z^*_\mu}{\partial z_\alpha} \frac{\partial^2}{\partial z^*_\mu\partial\bar z^*_\nu}+
\frac{\partial z^*_\nu}{\partial\bar z_\beta}\frac{\partial  z^*_\mu}{\partial z_\alpha} \frac{\partial^2}{\partial z^*_\mu\partial z^*_\nu}+
\frac{\partial z^*_\nu}{\partial\bar z_\beta}\frac{\partial  \bar z^*_\mu}{\partial z_\alpha} \frac{\partial^2}{\partial \bar z^*_\mu\partial z^*_\nu}+
\frac{\partial \bar z^*_\nu}{\partial\bar z_\beta}\frac{\partial \bar z^*_\mu}{\partial z_\alpha} \frac{\partial^2}{\partial \bar z^*_\mu\partial\bar z^*_\nu}
\\
& &+\frac{\partial \bar z^*_\nu}{\partial\bar z_\beta}\frac{\partial  x^*}{\partial z_\alpha} \frac{\partial^2}{\partial x^*\partial\bar z^*_\nu}
+\frac{\partial x^*}{\partial\bar z_\beta}\frac{\partial  z^*_\mu}{\partial z_\alpha} \frac{\partial^2}{\partial z^*_\mu\partial x^*}
+\frac{\partial x^*}{\partial\bar z_\beta}\frac{\partial \bar z^*_\mu}{\partial z_\alpha} \frac{\partial^2}{\partial \bar z^*_\mu\partial x^*}
+\frac{\partial x^*}{\partial z_\alpha}\frac{\partial  z^*_\nu}{\partial \bar z_\beta} \frac{\partial^2}{\partial z^*_\nu\partial x^*}\\
& &+\frac{\partial x^*}{\partial\bar z_\beta}\frac{\partial  x^*}{\partial z_\alpha} \frac{\partial^2}{(\partial x^*)^2}+
\frac{\partial^2 z^*_\nu}{\partial z_\alpha\partial \bar z_\beta}\frac{\partial}{\partial z^*_\nu}+
\frac{\partial^2 \bar z^*_\nu}{\partial z_\alpha \partial \bar z_\beta}\frac{\partial}{\partial \bar z^*_\nu}
+\frac{\partial^2 x^*}{\partial z_\alpha\partial \bar z_\beta}\frac{\partial}{\partial x^*}+\frac{\partial^2 \theta^*}{\partial z_\alpha\partial \bar z_\beta}\frac{\partial}{\partial \theta^*}\\
& &+\text{2nd order terms containing derivatives of}\ \theta^*\\
&=& \frac{\partial^2}{\partial z^*_\alpha\partial \bar z^*_\beta}+O((x^*)^2) \cdot \left((x^*)^2 \frac{\partial^2 }{(\partial x^*)^2}+ \frac{\partial }{\partial x^*} +\frac{\partial^2}{\partial x^*\partial\bar z^*_\beta}
+\frac{\partial^2}{\partial z^*_\alpha\partial x^*} \right)
\\
& &+O((x^*)^\infty) \text{ operators.}
\eeqs

Next we compute mixed derivatives.
$$\frac{\partial}{\partial z_n}=\frac{\partial z^*_\beta}{\partial z_n}\frac{\partial}{\partial z^*_\beta}+
\frac{\partial \bar z^*_\beta}{\partial z_n}\frac{\partial}{\partial \bar z^*_\beta}
+\frac{\partial x^*}{\partial z_n}\frac{\partial}{\partial x^*}+\frac{\partial \theta^*}{\partial z_n}\frac{\partial}{\partial \theta^*}.$$
It is obvious that
$$\frac{\partial z^*_\beta}{\partial z_n}=O(1),\quad \frac{\partial \bar z^*_\beta}{\partial z_n}=O(1).$$

$$\frac{\partial x^*}{\partial z_n}=\frac{2(x^*)^2}{r^*}\big(\cos\theta^*\frac{\partial\xi}{\partial z_n}+\sin\theta^*\frac{\partial\eta}{\partial z_n}\big)=O((x^*)^2(r^*)^{-1}).$$

$$\frac{\partial \theta^*}{\partial z_n}=\frac{1}{r^*}\big(-\sin\theta^*\frac{\partial\xi}{\partial z_n}+\cos\theta^*\frac{\partial\eta}{\partial z_n}\big)=O((r^*)^{-1}).$$
By chain rule, we can see that 
\beqs
z_n\frac{\partial^2 x^*}{\partial z_n\partial \bar z_\beta}&=& z_n\frac{2(x^*)^2}{r^*}\big(\cos\theta^*\frac{\partial^2\xi}{\partial z_n\partial\bar z_\beta}+\sin\theta^*\frac{\partial^2\eta}{\partial z_n\partial\bar z_\beta}\big)\\
& &+ z_n\frac{2(x^*)^2}{r^*}\big(-\sin\theta^*\frac{\partial\xi}{\partial z_n}+\cos\theta^*\frac{\partial\eta}{\partial z_n}\big)\frac{\partial \theta^*}{\partial\bar z_\beta}\\
& &+ \frac{2z_n}{r^*}\frac{\partial x^*}{\partial z_n}\frac{\partial x^*}{\partial\bar z_\beta}-\frac{z_n}{r^*}\frac{\partial x^*}{\partial z_n}\big(\cos\theta^*\frac{\partial\xi}{\partial \bar z_\beta}+\sin\theta^*\frac{\partial\eta}{\partial \bar z_\beta}\big)\\
&=& O((x^*)^2)+O((x^*)^2)+O((x^*)^3)+O((x^*)^2)=O((x^*)^2)
\eeqs
Similarly, we have
$$z_n\frac{\partial^2 \theta^*}{\partial z_n\partial \bar z_\beta}=O(1).$$
Since in the mixed second order derivatives, $\frac{\partial}{\partial z_n}$ always go with $z_n$, all derivatives involving $\theta^*$ can be ignored, we have
\beqs
z_n\frac{\partial^2}{\partial z_n\partial\bar z_\beta}&=& 
z_n\Big(\frac{\partial z^*_\mu}{\partial z_n}\frac{\partial}{\partial z^*_\mu}+
\frac{\partial \bar z^*_\mu}{\partial z_n}\frac{\partial}{\partial \bar z^*_\mu}
+\frac{\partial x^*}{\partial z_n}\frac{\partial}{\partial x^*}+\frac{\partial \theta^*}{\partial z_n}\frac{\partial}{\partial \theta^*}\Big)\circ\\
& &\Big(\frac{\partial z^*_\nu}{\partial \bar z_\beta}\frac{\partial}{\partial z^*_\nu}+
\frac{\partial \bar z^*_\nu}{\partial \bar z_\beta}\frac{\partial}{\partial \bar z^*_\nu}
+\frac{\partial x^*}{\partial \bar z_\beta}\frac{\partial}{\partial x^*}+\frac{\partial \theta^*}{\partial \bar z_\beta}\frac{\partial}{\partial \theta^*}\Big)\\
&=&
z_n\frac{\partial \bar z^*_\nu}{\partial\bar z_\beta}\frac{\partial  z^*_\mu}{\partial z_n} \frac{\partial^2}{\partial z^*_\mu\partial\bar z^*_\nu}+
z_n\frac{\partial z^*_\nu}{\partial\bar z_\beta}\frac{\partial  z^*_\mu}{\partial z_n} \frac{\partial^2}{\partial z^*_\mu\partial z^*_\nu}+
z_n\frac{\partial z^*_\nu}{\partial\bar z_\beta}\frac{\partial  \bar z^*_\mu}{\partial z_n} \frac{\partial^2}{\partial \bar z^*_\mu\partial z^*_\nu}\\
& &+z_n\frac{\partial \bar z^*_\nu}{\partial\bar z_\beta}\frac{\partial \bar z^*_\mu}{\partial z_n} \frac{\partial^2}{\partial \bar z^*_\mu\partial\bar z^*_\nu}
+z_n\frac{\partial \bar z^*_\nu}{\partial\bar z_\beta}\frac{\partial  x^*}{\partial z_n} \frac{\partial^2}{\partial x^*\partial\bar z^*_\nu}
+z_n\frac{\partial x^*}{\partial\bar z_\beta}\frac{\partial  z^*_\mu}{\partial z_n} \frac{\partial^2}{\partial z^*_\mu\partial x^*}\\
& &+z_n\frac{\partial x^*}{\partial\bar z_\beta}\frac{\partial \bar z^*_\mu}{\partial z_n} \frac{\partial^2}{\partial \bar z^*_\mu\partial x^*}
+z_n\frac{\partial x^*}{\partial z_n}\frac{\partial  z^*_\nu}{\partial \bar z_\beta} \frac{\partial^2}{\partial z^*_\nu\partial x^*}\
+z_n\frac{\partial x^*}{\partial\bar z_\beta}\frac{\partial  x^*}{\partial z_n} \frac{\partial^2}{(\partial x^*)^2}\\
& &+z_n\frac{\partial^2 z^*_\nu}{\partial z_n \partial \bar z_\beta}\frac{\partial}{\partial z^*_\nu}+
z_n\frac{\partial^2 \bar z^*_\nu}{\partial z_n\partial \bar z_\beta}\frac{\partial}{\partial \bar z^*_\nu}
+z_n\frac{\partial^2 x^*}{\partial z_n\partial \bar z_\beta}\frac{\partial}{\partial x^*}
+z_n\frac{\partial^2 \theta^*}{\partial z_n\partial \bar z_\beta}\frac{\partial}{\partial \theta^*}\\
& &+\text{2nd order terms containing derivatives of}\ \theta^*\\
&=& O((x^*)^2)\frac{\partial^2}{\partial x^*\partial \bar z_\beta}+O((x^*)^4)\frac{\partial^2}{(\partial x^*)^2}+O((x^*)^2)\frac{\partial}{\partial x^*}+O((x^*)^\infty) {\text{ operators}}
\eeqs

In sum,\eqref{eq-TrV-XStar} is verified. 
\qed
\end{proof}

\begin{proof}[Proof of Lemma \ref{lem-deriv}]
We use the local holomorphic coordinates to check this. First, we have
$$\frac{\partial^2}{\partial z_\alpha\partial\bar z_\beta} \Big(c_{i,j}(z_1^*,\dots, z_{n-1}^*) (x^*)^i(\log x^*)^j\Big)=\frac{\partial^2 c_{i,j}}{\partial z^*_\alpha\partial\bar z^*_\beta} (x^*)^i(\log x^*)^j+o((x^*)^i(\log x^*)^j).$$
Recall that
$$\frac{\partial x^*}{\partial z_\alpha}=O((x^*)^2), \quad \frac{\partial}{\partial z_\alpha}=\frac{\partial}{\partial z^*_\alpha}+O((x^*)^2) \frac{\partial}{\partial x^*} +O((x^*)^\infty) \text{ operators},$$
so we have
$$\frac{\partial^2}{\partial z_n\partial\bar z_\beta} \Big(c_{i,j}(z_1^*,\dots, z_{n-1}^*) (x^*)^i(\log x^*)^j\Big)=O((r^*)^{-1} (x^*)^{i+1}(\log x^*)^j).$$
Finally, we have
$$\frac{\partial^2}{\partial z_n\partial\bar z_n} \Big(c_{i,j}(z_1^*,\dots, z_{n-1}^*) (x^*)^i(\log x^*)^j\Big)=O( (x^*)^i(\log x^*)^j).$$
Then the lemma follows from (\ref{eq-GAB}).
\qed
\end{proof}

\bigskip

\bigskip

{\noindent \footnotesize $^{*}$ Department of Mathematics\\
Rutgers University, Piscataway, NJ 08854\\

\noindent $^{**}$ Department of Mathematics\\
Nanjing University, Nanjing, China 210093\\

\end{document}